\newcommand{\J}{\mathcal{J}}
\newtheorem{theorem}{Theorem}
\newtheorem{lemma}{Lemma}
\newtheorem{claim}{Claim}
\begin{document}

\title{New complexity and approximability results for minimizing the total weighted completion time on a single machine subject to non-renewable resource constraints}

\author{P\'eter Gy\"orgyi\footnote{gyorgyi.peter@sztaki.hu} }
\author{Tam\'as Kis\footnote{kis.tamas@sztaki.hu}}
\affil{\small Institute for Computer Science and Control\\ 1111 Budapest, Kende str. 13-17, Hungary}

\maketitle

\begin{abstract}
In this paper we consider single machine scheduling problems with additional non-renewable resource constraints. Examples for non-renewable resources include raw materials, energy, or money.
Usually they have an initial stock and replenishments arrive over time at a-priori known time points and quantities. 
The jobs have some requirements from the resources and a job can only be started if the available quantity from each of the required resources exceeds the requirements of the job. Upon starting a job, it consumes its requirements which decreases the available quantities of the respective non-renewable resources.
There is a broad theoretical and practical background for this class of problems.  Most of the literature concentrate on the makespan, and the maximum lateness objectives.
This paper focuses on the total weighted completion time objective for which the list of the approximation algorithms is very short.
In this paper we extend that list by considering new special cases and obtain  new complexity results and approximation algorithms.
We show that even if there is only a single non-renewable resource, and each job has unit weight and requires only one unit from the resource, the problem is still NP-hard, however, in our construction we need a high-multiplicity encoding of the jobs in the input.
We also propose  an FPTAS for a  variant in which the jobs have arbitrary weights, and the number of supply time points is bounded by a constant.
Finally, we prove some non-trivial approximation guarantees for  simple greedy algorithms for some further variants of the problem.
\end{abstract}

\section{Introduction}\label{sec:intro}
Single machine scheduling is one of the oldest scheduling problems with many theoretical results and practical applications.
In the recent years the importance of non-renewable resources (like raw materials, energy or money) is increasing.
These resources are consumed by the jobs when the machine starts to process them.
There is an  initial stock, and some additional supplies arrive at given supply dates and in known quantities.
Since the early 80s several papers examined the problem, but mainly  considered "min max" type objectives, such as the makespan, or the maximum lateness.
In this paper we focus on the total (weighted) completion time objective and prove some new complexity and approximability results.

Formally, we have a set of $n$ jobs $\J$ to be scheduled on a single machine, and a non-renewable resource.
Each job $j\in \J$ has a processing time (duration) $p_j>0$, a weight $w_j>0$, and a required quantity $a_j\geq 0$ from the resource. In addition, the number $q\geq 1$ specifies the number of supplies from the non-renewable resource.
Each supply is characterized by a time point $u_\ell\geq 0$ and by a quantity $\tilde{b}_\ell > 0$.  The time points satisfy  $0=u_1<u_2<\ldots<u_q$.
The resource is consumed each time some job $j$ with positive  $a_j$  is started. That is, if some job $j$ is started at time $t$, then the available quantity of the resource must be at least $a_j$, and the inventory of the resource is decreased by $a_j$ immediately at time $t$.
A schedule specifies a start time $S_j$ for each $j \in \J$, and it is {\em feasible\/} if the total supply until any time point $t$ is at least the total demand of those jobs starting not later than $t$. In other words, let $u_\ell$ be the latest supply time point before $t$, then $\sum_{j\in \J:S_j\leq t}a_j\leq \sum _{\ell'=1}^{\ell}\tilde{b}_{\ell'}$ must hold.
We aim at finding a feasible schedule $S$ that minimizes the total weighted completion time $\sum_{j\in\J}w_jC_j$, where $C_j=S_j+p_j$.

Observe that we can assume that the total resource requirement of the jobs matches the total amount supplied, hence at least one job starts not earlier than $u_q$ in any feasible schedule.

Scheduling with non-renewable resources is not only theoretically challenging, but it occurs frequently in practice.
E.g., \cite{Herr16} examine the continuous casting stage of steel production in which hot metal is the non-renewable resource  supplied by  a blast furnace.
A similar problem is studied in \cite{Carrera10} at a shoe-firm and there are examples also in the consumer goods industry and in computer assembly, see \cite{Stadtler08}.
Note that the problem is a special case of the resource constrained project scheduling problem, which has several further practical applications.
We summarize the most important antecedents of this research in Section \ref{sec:prev_work}.

\subsection{Terminology}\label{sec:terminology}

Recall the standard $\alpha|\beta|\gamma$ notation of \cite{Graham79}, where  $\alpha$ indicates the machine environment, the $\beta$ field contains the additional constraints, and the $\gamma$ field provides the objective function. In this notation, our scheduling problem can be compactly represented as $1|nr=1|\sum w_jC_j$, where "$nr=1$" in the $\beta$ field indicates that there is only one type of non-renewable resource, and the "1" in the $\alpha$ field stipulates the single machine environment.

A {\em supply period\/} is a time interval between two consecutive supply time points, and $[u_\ell, u_{\ell+1})$ is the $\ell^{th}$ supply period, where $u_{q+1} = \infty$.
We will  {\em assign\/} jobs to  supply periods, and we say that an assignment is {\em feasible\/} if there is  a schedule in which for each index  $\ell$, the total resource requirements of those jobs assigned to supply periods 1 through $\ell$ does not exceed the total supply over the same periods.

If there are many identical jobs, then the input can be described compactly using a {\em high-multiplicity encoding of the jobs}. Suppose the set of jobs can be partitioned into $h$ classes such that all the jobs in the same class have the same parameters (processing time, job weight, and resource requirement).
Then in the input there is a positive integer number $h$ giving the number of job classes, and for each job class, we have a number $s_i$ providing the number of identical jobs in the class, and 3 other numbers $p_i$, $a_i$ and $w_i$ specifying the common parameters of all the jobs in the class.
If some of these values is the same over all the job classes, then it can be represented only once in the input, but this further simplification does not decrease the size of the input significantly. 
The other input parameters are $q$, the number of supply time points, and the time points $u_\ell$ and supplied quantities $\tilde{b}_\ell$ for $\ell = 1,\ldots,q$.

A polynomial time algorithm on a high-multiplicity input must produce a compact schedule the size of which is bounded by a polynomial in the size of the high-multiplicity input.
A natural schedule representation consists of $h\cdot q$ tuples  $(\ell, i, t_{i\ell}, g_{i\ell})$, where $\ell$ is the index of the supply period, $i$ is that of the job class, $t_{i\ell}$ is the start time of the first job from class $\mathcal{J}_i$ scheduled after  $u_\ell$, and $g_{i\ell}$ is the number of jobs from this class  scheduled consecutively from $t_{i\ell}$ on.
It is easy to see that one can check the feasibility, and compute the objective function value of such a schedule in polynomial time in the size of the high-multiplicity encoded input.
In the  $\alpha|\beta|\gamma$ notation, the tag "$hme$" in the $\beta$ field indicates the high-multiplicity encoding of the input.

A $\rho$-approximation algorithm for our scheduling problem is a polynomial time algorithm that on any input, provides a schedule of objective function value at most $\rho$ times the optimum.
An {\em PTAS\/} for our scheduling problem is a family of algorithms $\{A_\varepsilon\}_{\varepsilon>0}$, that for each $\varepsilon > 0$ contains an algorithm $A_\varepsilon$ which is a factor $(1+\varepsilon)$-approximation algorithm for the problem. An {\em FPTAS} is a family of approximation algorithms with the properties of a PTAS, and in addition, each $A_{\varepsilon}$ has a polynomial time complexity in $1/\varepsilon$ as well.

\subsection{Main results}\label{sec:results}
Firstly, we investigate the complexity and approximability of the problem $1|nr=1, a_j = \bar{a}, q = const.\ |\sum w_j C_j$, i.e., single machine environment,  one non-renewable resource, all jobs have the same required quantity from the resource, which has a constant number of supply time points, and the objective function is the weighted sum of the job completion times.
For our complexity result, we need a high-multiplicity encoding of the input.
\begin{theorem}
The problem $1|nr=1, a_j = 1, q = 2, hme\ |\sum C_j$   is NP-hard.
\label{thm:nph1}
\end{theorem}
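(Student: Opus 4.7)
The plan is to reduce from a weakly NP-hard partition-type problem (e.g., Partition or Even--Odd Partition) on binary-encoded integers, building on the following structural observation. Because $a_j=1$ for every job and there are only two supply time points, a feasible schedule is fully determined by deciding, for each class $i$, how many jobs $n_i\in\{0,\ldots,s_i\}$ go into the ``first batch'' $A$ (with $\sum_i n_i\le \tilde b_1$) versus the ``second batch'' $B$. SPT is optimal within each batch, so
\[
\sum_{j\in\mathcal{J}} C_j \;=\; \mathrm{SPT}(A)+\mathrm{SPT}(B)+|B|\cdot\max(T_A,u_2),
\]
with $T_A=\sum_i n_i p_i$, and a standard exchange argument gives $|A|=\tilde b_1$ without loss. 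This recasts the scheduling problem as a piecewise-quadratic integer optimisation over $(n_1,\ldots,n_h)$ whose piece changes at $T_A=u_2$.

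I would encode a Partition instance on binary-encoded numbers $x_1,\ldots,x_h$ with target $T$ by setting $p_i = M + x_i$ for a sufficiently large offset $M$, together with one or more filler classes of small processing time whose multiplicities are computed in binary from the Partition parameters. The offset $M$ absorbs the ``mechanical'' part of the SPT sums into an $(n_i)$-independent constant, and the filler classes are tuned so that both $|A|=\tilde b_1$ and $T_A=u_2$ (the critical boundary) are realisable exactly when a valid partition subset exists, while otherwise the objective stays strictly above the threshold value. Because those filler multiplicities must grow polynomially with the binary-encoded parameters (and so cannot be listed job-by-job within polynomial space), the high-multiplicity encoding is exactly what lets the reduction remain polynomial in the input size.

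The main obstacle will be the NO direction of the equivalence. In the regime $T_A\le u_2$ the objective depends on $T_A$ only through the constant $|B|\cdot u_2$, so ``near-miss'' configurations of $A$ that undershoot $u_2$ could otherwise match the value obtained by a bona fide partition witness and spoil the reduction. I will engineer the filler classes so that moving units of them between $A$ and $B$ injects a strictly positive penalty through the cross-terms $n_i n_{i'}$ that appear in $\mathrm{SPT}(A)+\mathrm{SPT}(B)$, ruling out such near misses. Correctness then follows by a case analysis over the sign of $T_A-u_2$ and over whether the signature-class component of $A$ matches a partition witness, showing that every deviation from the intended configuration strictly increases the objective; combined with the YES direction (take $A$ to be the witness plus fillers), this yields NP-hardness under the high-multiplicity encoding.
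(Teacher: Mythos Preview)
Your overall plan---reduce from a partition-type problem, encode the items as processing times with a large additive offset, and exploit the $hme$ encoding to allow filler classes whose multiplicities are only polynomial in the \emph{binary} input---is exactly the right shape, and it matches the paper's strategy at that level. The structural observation that the optimum is determined by the class-multiplicity vector $(n_1,\ldots,n_h)$ of the first batch and that SPT is optimal inside each batch is also correct and useful.

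The gap is in the mechanism you propose for the undershoot regime $T_A<u_2$. As you yourself note, in that regime the objective collapses to $\mathrm{SPT}(A)+\mathrm{SPT}(B)+|B|\cdot u_2$, which is \emph{completely insensitive} to the idle amount $u_2-T_A$. You then appeal to the cross-terms $n_in_{i'}$ in $\mathrm{SPT}(A)+\mathrm{SPT}(B)$ to separate near misses from a genuine witness. But those cross-terms depend only on the class-count profile $(n_1,\ldots,n_h)$, not on \emph{which} signature items are chosen. If each partition number $x_i$ is its own singleton class with $p_i=M+x_i$, then any two batches $A$ containing the same number $k$ of signature items and the same number of fillers have identical cross-terms with the filler class; the only difference lies in the within-signature SPT sums $\sum_j (k-j+1)x_{i_j}$, which can easily be \emph{smaller} for a non-witness subset whose total undershoots the target. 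So a NO instance can produce a batch $A$ with $T_A<u_2$ and $\mathrm{SPT}(A)+\mathrm{SPT}(B)$ no larger than the YES value, and your threshold test does not separate them.

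The paper closes this hole with an ingredient your plan is missing: a filler class of very \emph{large} processing time (size $200^{n^2}$) and exponentially large multiplicity ($200^{n^2}$ copies), sitting at the very end of the SPT order in $B$. Idle time before $u_2$ means less processing was done before $u_2$, hence more medium-job processing is pushed into $B$, which delays every one of those $200^{n^2}$ big jobs by at least one unit; the resulting penalty of order $200^{n^2}$ swamps all other slacks. Symmetrically, a class of $200^{n^2}$ unit-length jobs at the \emph{front} of $B$ amplifies any overshoot $T_A>u_2$. Both amplifier classes are essential and both need exponentially large multiplicity, which is precisely why the $hme$ encoding is required. Your proposal of ``filler classes of small processing time'' supplies only the overshoot amplifier; without a large-job amplifier the NO direction fails.
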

That is, minimizing the sum of the job completion times is NP-hard even if there are only two supply time points and all the jobs require only one unit from the resource.
In fact, we reduce the NP-hard EQUAL-CARDINALITY PARTITION problem to our scheduling problem, and we need a huge number of jobs in certain job classes.

For non-constant $q$, and $w_j \equiv 1$, we have a factor 2 approximation algorithm.
\begin{theorem}
Scheduling the jobs in non-decreasing processing time order is a 2-approximation algorithm  for $1|nr=1, a_j=\bar{a}|\sum C_j$.\label{thm:2approx_a=w=1}
\end{theorem}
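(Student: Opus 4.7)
The plan is to upper-bound the SPT schedule's cost and combine two simple lower bounds on the optimum.

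\emph{Reformulation.} Because every job consumes the same quantity $\bar a$, the resource constraint is equivalent to: at most $N_\ell:=\lfloor(\sum_{\ell'\le\ell}\tilde b_{\ell'})/\bar a\rfloor$ jobs may be started by any time in $[u_\ell,u_{\ell+1})$. Setting $\rho_k:=u_{\ell(k)}$ with $\ell(k):=\min\{\ell\colon N_\ell\ge k\}$, the value $\rho_k$ is the earliest time at which the $k$-th job in start-time order may begin in any feasible schedule, and the sequence $\rho_1\le\rho_2\le\cdots\le\rho_n$ is non-decreasing.

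\emph{Two lower bounds on \textnormal{OPT}.} Let $p_{\pi(1)}\le p_{\pi(2)}\le\cdots\le p_{\pi(n)}$ be the jobs listed in SPT order, and set $P_k:=\sum_{i=1}^k p_{\pi(i)}$ and $P:=P_n$. First, dropping the resource constraint only enlarges the feasible region, and SPT is well known to be optimal on a single machine with no further constraints, hence $\textnormal{OPT}\ge L_1:=\sum_{k=1}^n P_k$. Second, in any feasible schedule the job occupying start-position $k$ begins no earlier than $\rho_k$, so it completes no earlier than $\rho_k+p_{\sigma(k)}$ where $\sigma(k)$ is that job; summing over $k$ gives $\textnormal{OPT}\ge L_2+P$ with $L_2:=\sum_{k=1}^n\rho_k$.

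\emph{Upper bound on SPT.} The SPT schedule partitions the positions $1,\dots,n$ into maximal blocks of consecutively processed jobs; if block $b$ starts at position $k_b$, its first job begins exactly at time $\rho_{k_b}$ (the machine being idle immediately before). A trivial induction within the block yields $C_k^{SPT}=\rho_{k_b}+\sum_{i=k_b}^{k}p_{\pi(i)}\le\rho_k+P_k$ for every position $k$ in the block, using $\rho_{k_b}\le\rho_k$. Summing, $\sum_{k}C_k^{SPT}\le L_1+L_2$, and combining with the two lower bounds gives $\sum_{k}C_k^{SPT}\le L_1+L_2\le 2\,\textnormal{OPT}-P\le 2\,\textnormal{OPT}$.

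The main thing to get right is the translation of the resource constraint into position-indexed release dates $\rho_k$ together with the simultaneous validity of both lower bounds; neither one alone suffices, but their sum matches the SPT upper bound up to the harmless term $-P$. I do not anticipate a substantive obstacle beyond routine bookkeeping.
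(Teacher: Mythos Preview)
Your proof is correct and follows essentially the same approach as the paper: both arguments combine the SPT lower bound with a second lower bound that, in the paper's notation, reads $\sum_{\ell} u_\ell\,\nu_{\ell-1,\ell}\le OPT$ (this is exactly your $L_2=\sum_k\rho_k$, since $\nu_{\ell-1,\ell}$ counts the positions $k$ with $\rho_k=u_\ell$), and both show the algorithm's cost is at most the sum of the two bounds. Your formulation via position-indexed release dates $\rho_k$ is somewhat cleaner than the paper's bookkeeping with gaps $G_\ell$ and block indices $\ell_i$, and it even yields the slightly sharper $L_1+L_2\le 2\,OPT-P$, but the underlying decomposition is the same.
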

For $q$ constant, we have stronger results even with arbitrary job weights.
\begin{theorem}
The problem $1|nr=1, a_j = \bar{a}, q = const\ |\sum w_j C_j$  admits an FPTAS.
\label{thm:fptas}
\end{theorem}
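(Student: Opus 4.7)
The plan is to combine a structural decomposition of the objective with a rounded dynamic program that exploits the constant number $q$ of supply periods.

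Since all jobs share the same resource requirement $\bar a$, the resource feasibility of an assignment $\alpha : \J \to \{1,\dots,q\}$ reduces to the cardinality constraint $|\alpha^{-1}(\{1,\dots,\ell\})| \le B_\ell$ with $B_\ell = \lfloor \sum_{\ell' \le \ell} \tilde b_{\ell'}/\bar a \rfloor$. For a fixed feasible $\alpha$, an optimal schedule orders the jobs of $S_\ell := \alpha^{-1}(\ell)$ by WSPT and starts the $\ell$th block at $T_\ell = \max_{\ell' \le \ell}\bigl(u_{\ell'} + \sum_{k=\ell'}^{\ell-1} P_k\bigr)$ with $P_k = \sum_{j\in S_k}p_j$. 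A direct calculation then gives
\[
\sum_{j\in \J} w_j C_j \;=\; \sum_{\ell=1}^q T_\ell W_\ell \;+\; \sum_{\ell=1}^q \sum_{j\in S_\ell} w_j P_\ell^{\prec j} \;+\; \sum_{j\in \J} w_j p_j,
\]
where $W_\ell = \sum_{j\in S_\ell} w_j$ and $P_\ell^{\prec j}$ is the total processing time of the jobs of $S_\ell$ preceding $j$ in WSPT order; the last sum is a constant and can be ignored.

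The key observation is that when we process the jobs one by one in WSPT order and assign each of them to some period, the ``internal'' contribution of a job $j$ sent to period $\ell$ is exactly $w_j P_\ell^{\text{current}}$, where $P_\ell^{\text{current}}$ is the running processing total already accumulated in $\ell$; this can be added to the objective on the fly. The ``external'' term $\sum_\ell T_\ell W_\ell$, by contrast, depends only on the final profile $(P_1,\dots,P_q,W_1,\dots,W_q)$. I would therefore round both the processing times and the weights (see below) and then run a DP over the jobs in WSPT order whose state is $(i,N_1,\dots,N_q,P_1,\dots,P_q,W_1,\dots,W_q)$, with $i$ the WSPT index and $N_\ell, P_\ell, W_\ell$ the current rounded numbers, processing totals and weight totals of each period. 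The transition ``assign $j_{i+1}$ to period $\ell$'' adds $w_{j_{i+1}} P_\ell$ to the accumulated cost and updates $(N_\ell, P_\ell, W_\ell)$, subject to the prefix constraints $N_1+\dots+N_\ell \le B_\ell$; at termination the external cost $\sum_\ell T_\ell W_\ell$ is added from the final state. Because $q$ is a constant, the number of states is $\mathrm{poly}(n,1/\varepsilon)$ and the algorithm runs in polynomial time.

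The main obstacle is to set up the rounding so that both the DP is polynomial and the error is at most a multiplicative $(1+\varepsilon)$ factor. Geometric rounding $\tilde p_j/p_j \in [1,1+\delta)$, and likewise for the weights, is the cleanest choice: since $(1+\delta)\,u_\ell \ge u_\ell$, an easy induction on $\ell$ yields $\tilde T_\ell \le (1+\delta)\,T_\ell$ and hence $\tilde C_j \le (1+\delta)\,C_j$ for every schedule, so the optimum of the rounded instance is within a $(1+\delta)$-factor of $\mathrm{OPT}$ independently of any trivial lower bound. The only remaining point is to keep the running totals $P_\ell$ and $W_\ell$ on a polynomial grid inside the DP state; this is achieved by one additional layer of geometric rounding of these sums to powers of $(1+\delta)$, losing a further factor $(1+\delta)^q = 1+O(\varepsilon)$ that is absorbed by choosing $\delta = \Theta(\varepsilon/q)$.
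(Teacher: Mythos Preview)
Your approach is essentially the one the paper takes: process the jobs in WSPT order, run a dynamic program whose state records the per-period counts $N_\ell$, processing loads $P_\ell$ and weight totals $W_\ell$, check feasibility through the prefix constraints $N_1+\cdots+N_\ell\le B_\ell$, and evaluate terminal states via $\sum_\ell T_\ell W_\ell$ plus the accumulated internal cost. The only structural difference is cosmetic: the paper stores the per-period internal contribution $WP_\ell$ as a fourth coordinate block in the state, whereas you carry it as the accumulated cost of a path; both are fine.

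There is, however, a genuine calibration error in your last paragraph. The geometric rounding of the running sums $P_\ell$ and $W_\ell$ is applied \emph{once per job assignment}, not once per period. After all $n$ transitions each rounded $\tilde P_\ell$ can exceed the true $P_\ell$ by a factor $(1+\delta)^{|S_\ell|}$, and the final value involves products such as $T_\ell W_\ell$ whose combined error is therefore $(1+\delta)^{O(n)}$, not $(1+\delta)^{q}$. With your choice $\delta=\Theta(\varepsilon/q)$ this blows up to $(1+\varepsilon/q)^{n}$, which is unbounded, so the scheme as stated is not an FPTAS. The fix is exactly what the paper does: take $\delta=\Theta(\varepsilon/n)$ (the paper uses $\Delta=1+\varepsilon/(2n)$). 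Each rounded coordinate then takes at most $O\bigl(\tfrac{n}{\varepsilon}\ln(n\cdot\mathrm{MAXNUM})\bigr)$ distinct values, which is still polynomial in the input size and in $1/\varepsilon$, so the running-time bound survives. Your preliminary geometric rounding of the individual $p_j$ and $w_j$ becomes unnecessary once the state rounding is parameterised correctly, but it does no harm.
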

When $q=2$, then an FPTAS exists for arbitrary $a_j$ values as well, see \cite{kis15}.
The FPTAS of Theorem~\ref{thm:fptas} can be extended to high-multiplicity encoding of jobs, provided that the number of job classes is bounded by a constant.
\begin{theorem}
The problem $1|nr=1, a_j = \bar{a}, q = const, hme, h=const\ |\sum w_j C_j$  admits an FPTAS.
\label{thm:fptas_hme}
\end{theorem}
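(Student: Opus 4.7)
My plan is to reduce the problem to minimizing a polynomial of degree two over a constant number of non-negative integer variables, and then to approximate that reduction by a rounded enumeration scheme.

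First, I would observe that, since all jobs within a class are identical, any optimal schedule can be rearranged so that the class-$i$ jobs assigned to supply period $\ell$ run consecutively, and, within each period, the classes appear in WSPT order (decreasing $w_i/p_i$). A candidate schedule is then completely described by the integer matrix $(x_{i\ell})_{i\in[h],\ell\in[q]}$ with $x_{i\ell}$ the number of class-$i$ jobs placed in period $\ell$, subject to $\sum_\ell x_{i\ell}=s_i$ for each $i$ and the cumulative resource inequalities. Moreover, for each of the $2^{q-1}=O(1)$ possible \emph{timing patterns}---the choice, for every $\ell\ge 2$, between $T_\ell=u_\ell$ and $T_\ell=T_{\ell-1}+P_{\ell-1}$, with $P_\ell=\sum_i x_{i\ell}p_i$---the start times $T_\ell$ become linear functions of the period loads, and the total weighted completion time becomes a polynomial of degree two with non-negative coefficients in the variables $x_{i\ell}$.

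Second, for each fixed timing pattern I would enumerate over a rounded grid of candidate values for the $h(q-1)=O(1)$ free variables (the last-period counts being determined by the balance equations). For each $(i,\ell)$ define
\[
V_{i\ell}=\{0,1,\dots,N\}\cup\{\lceil N(1+\epsilon')^k\rceil : k\ge 1,\ \lceil N(1+\epsilon')^k\rceil\le s_i\},
\]
with $N=\lceil hq/\epsilon\rceil$ and $\epsilon'=\Theta(\epsilon)$; each such set has cardinality $O(\epsilon^{-1}\log s_{\max})$, so the total number of grid configurations is $O((\epsilon^{-1}\log s_{\max})^{h(q-1)})$, polynomial in the size of the high-multiplicity input and in $1/\epsilon$. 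For each configuration I would recover the implied last-period counts, check cumulative-resource feasibility, evaluate the induced objective, and retain the best.

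The main obstacle will be the approximation analysis. I plan to show that the optimum $(x^*_{i\ell})$ can be coordinatewise rounded to some $(\hat x_{i\ell})\in\prod_{i,\ell}V_{i\ell}$ with $x^*_{i\ell}/(1+\epsilon')-O(1)\le \hat x_{i\ell}\le x^*_{i\ell}$. Because the $T_\ell$'s are linear in the period loads and the objective is a non-negative-coefficient polynomial of degree two in the $x_{i\ell}$'s, both scale by at most a factor $(1+O(\epsilon'))$; the additive losses incurred for small coordinates are absorbed thanks to the choice $N=\Theta(hq/\epsilon)$, and repairing the balance equations shifts each last-period count by at most $O(q\epsilon'\cdot s_i)$, again affecting the objective only by a $(1+O(\epsilon'))$ factor. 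Tuning $\epsilon'=\Theta(\epsilon)$ then yields a $(1+\epsilon)$-approximation in time polynomial in the size of the high-multiplicity input and in $1/\epsilon$.
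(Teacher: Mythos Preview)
Your rounding-and-repair step contains a real gap. The claim ``repairing the balance equations shifts each last-period count by at most $O(q\epsilon' s_i)$, again affecting the objective only by a $(1+O(\epsilon'))$ factor'' is false: an additive shift of $\Theta(\epsilon' s_i)$ in $x_{iq}$ is not a multiplicative $(1+O(\epsilon'))$ change when $x^*_{iq}$ is small or zero, and every job you push into period $q$ picks up at least $w_i u_q$, a quantity that is not controlled by $OPT$. Concretely, take $h=q=2$, $\bar a=1$, class~$1$ with $s_1$ unit-length jobs of weight $M$, class~$2$ with one unit-length job of weight $1$, supplies $\tilde b_1=s_1$ at time $0$ and $\tilde b_2=1$ at time $u_2>s_1$. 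The optimum puts all class-$1$ jobs in period~$1$, so $x^*_{1,1}=s_1$, $x^*_{1,2}=0$, and $OPT=M\binom{s_1+1}{2}+u_2+1$. For $s_1>N$ your set $V_{1,1}$ need not contain $s_1$ (consecutive grid points near $s_1$ are $\Theta(\epsilon' s_1)$ apart), so \emph{every} grid configuration forces at least one class-$1$ job into period~$2$, adding at least $M u_2$ to the cost. With $M=u_2=s_1^{3}$ this extra cost is $s_1^{6}$ while $OPT\approx s_1^{5}/2$; no configuration your algorithm enumerates is within any constant factor of $OPT$.

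The paper avoids exactly this pitfall by rounding in the opposite direction and allocating backward. For each $(i,\ell)$ it enumerates an exponent $k_{i\ell}$ with $n^*_{i\ell}\le\lfloor(1+\varepsilon)^{k_{i\ell}}\rfloor$, and then fills periods from $q$ down to $1$, giving period $\ell$ at most $\lfloor(1+\varepsilon)^{k_{i\ell}}\rfloor$ jobs of class $i$ and stopping once all $s_i$ jobs are placed. This simultaneously yields $\delta_{i\ell}\le(1+\varepsilon)\,n^*_{i\ell}$ for \emph{every} period $\ell$ (a genuine multiplicative bound, so $P_\ell$, $W_\ell$, $WP_\ell$ and hence the objective inflate by at most $(1+\varepsilon)^{O(1)}$) and $\sum_{\ell'\le\ell}\delta_{i\ell'}\le\sum_{\ell'\le\ell}n^*_{i\ell'}$ (resource feasibility). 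The slack created by rounding up is thus absorbed in the earlier, cheaper periods rather than dumped at $u_q$. Your enumeration framework is fine, but the direction of rounding and the choice of dump period have to be made as in the paper for the analysis to go through.
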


The second problem studied in this paper is $1|nr=1, p_j = 1, a_j = w_j \ |\ \sum w_j C_j$, i.e., we have a single machine environment, all jobs have unit processing time, and for each job the  resource requirement equals the weight. This problem has been shown NP-hard in the weak-sense by \cite{Gyorgyi19}.
When the number of supply dates is part of the input, we can prove the following.
\begin{theorem}\label{thm:3approx}
Scheduling the jobs in non-increasing $w_j$ order is a 3-approximation algorithm for $1|nr=1,p_j=1,w_j=a_j|\sum w_j C_j$.
\end{theorem}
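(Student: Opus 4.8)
The plan is to reindex the jobs by non-increasing weight, $w_1 \ge w_2 \ge \cdots \ge w_n$, which (since $a_j = w_j$) is simultaneously the non-increasing order of resource requirements and exactly the order the algorithm produces. Writing $P_j = \sum_{i \le j} w_i = \sum_{i \le j} a_i$ for the prefix demands and $W = P_n = \sum_j w_j$, I would introduce the release times $r_j = \min\{u_\ell : \sum_{\ell' \le \ell}\tilde b_{\ell'} \ge P_j\}$, i.e.\ the earliest moment at which enough resource has arrived to have started the $j$ heaviest jobs. These are finite because the total supply equals $W$, and the greedy start times obey the recursion $S_j^G = \max(C_{j-1}^G, r_j)$ with $C_j^G = S_j^G + 1$; in particular the schedule decomposes into maximal blocks of consecutive jobs run without idling, each block $B$ beginning with some job $b$ at time $r_b$ after a forced idle gap of length $I_b \ge 0$. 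I would then establish two lower bounds on the optimum and combine them.

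The first lower bound is the machine relaxation: dropping the resource constraint leaves unit-length jobs, for which largest-weight-first is optimal, so $OPT \ge LB_{\mathrm{mach}} := \sum_{j} j\, w_j$ (the $k$-th smallest completion time is at least $k$, and by the rearrangement inequality heavy jobs should finish first). The second is an area bound that genuinely exploits $a_j = w_j$: for any feasible schedule with consumption profile $X(t) = \sum_{j: S_j \le t} a_j$ one has $\sum_j w_j C_j = W + \int_0^\infty (W - X(t))\,dt$, and feasibility gives $X(t) \le B_{\ell(t)} := \sum_{\ell' \le \ell(t)} \tilde b_{\ell'}$, whence $OPT \ge W + \int_0^\infty (W - B_{\ell(t)})\,dt$.

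Next I would apply the very same area identity to the greedy schedule and evaluate $\int_0^\infty (W - G(t))\,dt$ exactly, where $G$ is greedy's consumption profile. Splitting time into the $n$ unit processing intervals and the inter-block idle intervals, the processing part contributes precisely $LB_{\mathrm{mach}} - W$, while the idle interval before a block $b$ contributes $I_b\,V_b$, where $V_b := \sum_{i \ge b} w_i$ is the weight still unstarted there. This yields the clean identity $\sum_j w_j C_j^G = LB_{\mathrm{mach}} + \sum_B I_b V_b$. Writing $V_b = V_{b+1} + w_b$ then reduces the theorem to bounding each of $\sum_B I_b V_{b+1}$ and $\sum_B I_b w_b$ by $OPT - W$.

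The term $\sum_B I_b V_{b+1}$ is handled by the resource area bound: throughout the idle window before block $b$ the available supply is below $P_b$, so $W - B_{\ell(t)} > W - P_b = V_{b+1}$ there, and summing over the disjoint windows gives $\sum_B I_b V_{b+1} \le \int_0^\infty (W - B_{\ell(t)})\,dt \le OPT - W$. The main obstacle is the remaining term $\sum_B I_b w_b$, which measures exactly how greedy wastes time idling for a heavy job while lighter jobs could have run, and against which the crude supply bound loses precisely $w_b$ per unit of idle time. I expect to resolve it through the following pointwise observation, which I regard as the heart of the proof: throughout the idle window before block $b$ the supply is strictly below $P_b$, the combined demand of the $b$ heaviest jobs, so in \emph{every} feasible schedule at least one of those $b$ jobs is still unstarted at such a time; since each of them has weight at least $w_b$, the unstarted weight $W - X(t)$ is at least $w_b$ there. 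Integrating this inequality against the optimal profile gives $\sum_B I_b w_b \le \int_0^\infty (W - X(t))\,dt = OPT - W$. Combining everything, $\sum_j w_j C_j^G = LB_{\mathrm{mach}} + \sum_B I_b(V_{b+1} + w_b) \le OPT + 2(OPT - W) \le 3\,OPT$, as claimed.
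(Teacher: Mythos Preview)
Your proof is correct and proceeds along a genuinely different route from the paper's.

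Both arguments start from the identity $\sum_j w_j C_j^G = \sum_j j w_j + (\text{idle-time penalty})$ and bound the penalty by $2\,OPT$, but the decompositions and the bounding arguments differ. The paper organises the penalty by \emph{supply periods}: it writes the idle cost as $\sum_{\ell=1}^{q-1} G_\ell \sum_{\ell'>\ell} W_{\ell'}$, uses $\sum_{\ell'\le\ell} W_{\ell'} + w_{k_\ell+1} > b_\ell \ge \sum_{\ell'\le\ell} W^*_{\ell'}$ to replace greedy weights by optimal ones, and then splits the remainder into two terms. The first is dispatched via $s_\ell \le u_{\ell+1}-u_\ell$ and a telescoping rearrangement; the second, $\sum_\ell w_{k_\ell+1} G_\ell$, is bounded by a somewhat delicate recursive construction of witnesses $j_1,j_2,\ldots$ that must be delayed in $S^*$. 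You instead organise the penalty by greedy's \emph{idle blocks} and work with the continuous area identity $\sum_j w_j C_j = W + \int_0^\infty (W - X(t))\,dt$, which is tailor-made for the hypothesis $a_j = w_j$. Your first piece $\sum_B I_b V_{b+1}$ is controlled by the purely resource-based lower bound $\int (W - B_{\ell(t)})\,dt$, and your second piece $\sum_B I_b w_b$ by the pointwise pigeonhole observation that during each idle window at least one of the $b$ heaviest jobs is unstarted in \emph{any} feasible schedule, so $W - X^*(t) \ge w_b$ there.

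What each approach buys: the paper's proof stays combinatorial and never introduces integrals, at the cost of the slightly ad hoc chain $j_1,j_2,\ldots,j_t$ needed for the second term. Your area formulation is cleaner and shorter, makes the role of $a_j=w_j$ transparent, yields the slightly sharper bound $3\,OPT - 2W$, and the pigeonhole step replacing the paper's recursive witness argument is a genuine simplification.
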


However, for $q=2$, more can be said:
\begin{theorem}\label{thm:2approx}
Scheduling the jobs in non-increasing $w_j$ order is a 2-approximation algorithm for $1|nr=1,p_j=1,w_j=a_j,q=2|\sum w_j C_j$.
\end{theorem}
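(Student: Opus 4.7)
The plan is to split into two cases based on whether the period 1 of ALG is constrained by time ($m_1 = u_2$) or by resource ($m_1 < u_2$), where $\pi_1, \ldots, \pi_n$ denotes the non-increasing weight ordering and $m_1$ is the number of jobs ALG places before $u_2$. Counting completion times directly (the first $m_1$ jobs complete at times $1,\dots,m_1$ and the rest consecutively starting at $u_2+1$) yields $\mathrm{ALG} = V_1 + (u_2 - m_1)\,W'$ with $V_1 = \sum_{i=1}^n i\,w_{\pi_i}$ and $W' = \sum_{i>m_1} w_{\pi_i}$. Since $V_1$ is the WSPT value on a single machine with no resource constraint, $V_1 \le \mathrm{OPT}$; this already settles the time-bound case $m_1 = u_2$ in which $\mathrm{ALG}=V_1$.

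Assume henceforth $m_1 < u_2$, so the resource is binding: $\sum_{i\le m_1+1} w_{\pi_i} > \tilde{b}_1$. Let $A^*,B^*$ be OPT's period 1/period 2 sets and set $W^*_2 = \sum_{j \in B^*} w_j$. Write $\mathrm{OPT} = \psi(A^*) + u_2\,W^*_2$, where $\psi(A^*)$ is the sum of weighted within-period ranks. Recognising $\psi(A^*)$ as the cost of scheduling the $n$ unit-time jobs on two parallel identical machines (machine 1 running $A^*$, machine 2 running $B^*$, each in WSPT), the optimality of the alternating-WSPT schedule for $P2\mid p_j=1\mid \sum w_j C_j$ gives $\psi(A^*) \ge \sum_i \lceil i/2\rceil w_{\pi_i} \ge V_1/2$.

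For the period-2 weight, since $w(A^*)\le \tilde{b}_1 < \sum_{i\le m_1+1} w_{\pi_i}$, at least one of the top $m_1+1$ WSPT jobs lies in $B^*$, giving $W^*_2 \ge w_{\pi_{m_1+1}}$; combined with the trivial $W^*_2 \ge W-\tilde{b}_1$ this yields $2W^*_2 \ge (W-\tilde{b}_1)+w_{\pi_{m_1+1}}$. On the ALG side, the failure of $\pi_{m_1+1}$ to fit in period 1 forces $W^{\mathrm{ALG}} > \tilde{b}_1 - w_{\pi_{m_1+1}}$, hence $W' < W - \tilde{b}_1 + w_{\pi_{m_1+1}} \le 2W^*_2$. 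Chaining,
\[
2\,\mathrm{OPT} \;\ge\; V_1 + 2u_2 W^*_2 \;\ge\; V_1 + u_2 W' \;\ge\; V_1 + (u_2-m_1)\,W' \;=\; \mathrm{ALG}.
\]

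The main obstacle is organising the two lower bounds so that they combine to a factor of $2$ rather than $3$. The parallel-machine bound $\psi(A^*) \ge V_1/2$ is one key piece; the other is the averaged bound $2W^*_2 \ge W'$, which needs both the global resource balance and the pigeonhole on the top $m_1+1$ WSPT jobs. Using only the single-machine WSPT bound together with just one of the period-2 lower bounds yields only a factor-$3$ bound, in the spirit of Theorem~\ref{thm:3approx}.
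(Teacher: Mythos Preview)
Your argument is correct, and it is genuinely different from the paper's proof. A couple of small cosmetic points: the identity $\mathrm{OPT}=\psi(A^*)+u_2W_2^*$ should really be stated as $\mathrm{OPT}\ge\psi(A^*)+u_2W_2^*$ (this is all you use, and it holds regardless of whether $|A^*|=u_2$ or $|A^*|<u_2$); and the ``time-bound'' case is better phrased as ``ALG has no idle time before $u_2$'' rather than ``$m_1=u_2$'' to avoid integrality issues. Neither affects the validity of the chain
\[
2\,\mathrm{OPT}\ \ge\ 2\psi(A^*)+2u_2W_2^*\ \ge\ V_1+2u_2W_2^*\ \ge\ V_1+u_2W'\ \ge\ V_1+(u_2-m_1)W'=\mathrm{ALG}.
\]

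\medskip
\noindent\textbf{Comparison with the paper's proof.} The paper argues by contradiction on a minimal counterexample: it first reduces to the case where the heaviest job $J_1$ is scheduled at $u_2$ in the optimum, then writes out $\mathrm{ALG}-\mathrm{OPT}$ explicitly and bounds it by $\mathrm{OPT}$ through a somewhat delicate algebraic case split on $k=0$ versus $k\ge1$. Your proof is direct and conceptually cleaner: you identify two independent lower bounds on $\mathrm{OPT}$ and show that their sum already dominates $\mathrm{ALG}$. The first lower bound, $\psi(A^*)\ge V_1/2$, comes from the elegant observation that the ``within-period rank'' part of any feasible schedule is the cost of a $P2\mid p_j=1\mid\sum w_jC_j$ schedule and hence at least $\sum_i\lceil i/2\rceil w_{\pi_i}\ge V_1/2$; this replaces the paper's minimal-counterexample reduction. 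The second, $2W_2^*\ge W'$, is obtained by \emph{averaging} the global resource bound $W_2^*\ge W-\tilde b_1$ with the pigeonhole bound $W_2^*\ge w_{\pi_{m_1+1}}$; this is exactly the step that sharpens the factor from $3$ to $2$, and it substitutes for the paper's algebraic manipulation of the difference. Your approach is shorter, avoids the extremal-counterexample machinery, and the $P2$ lower bound may generalise more readily; the paper's approach, on the other hand, is closer in spirit to the general-$q$ argument of Theorem~\ref{thm:3approx} and makes the role of the heaviest job explicit.
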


We remark that this theorem remains valid in case of high-multiplicity encoding of the input.

For an overview of previous and new results, see Table~\ref{tab:sumresults}.

\begin{table}[!th]
\caption{Overview of previous and new results for variants of $1|nr=1|\sum w_j C_j$.}
\label{tab:sumresults}
\begin{center}
\begin{small}
\begin{tabular}{|l|l|l|l|}
\hline 
\#\textbf{Supplies} ($q$) &  \textbf{Restriction} & \textbf{Result} & \textbf{Source} \\  
\hline  
\hline
 arbitrary & $p_j =\bar{p},\, a_j = \bar{a}$  & poly.~time algo.  & \cite{Gyorgyi19} \\
 arbitrary & $p_j =\bar{p},\, w_j = \bar{w}$  & poly.~time algo.  & \cite{Gyorgyi19} \\
 arbitrary & $a_j = \bar{a},\, w_j =  p_j$ & poly.~time algo. & \cite{Gyorgyi19}\\
\hline
 arbitrary & $w_j = 1$ & strongly NP-hard & \cite{Carlier84} \\
 2       & $w_j = 1$            & weakly NP-hard      & \cite{kis15}\\
 $2$   & $p_j = 1$, $w_j =  a_j$  & weakly NP-hard & \cite{Gyorgyi19} \\
 $2$   & $w_j = p_j = a_j$  & weakly NP-hard & \cite{Gyorgyi19}\\
 arbitrary & $w_j = p_j = a_j$   & strongly NP-hard & \cite{Gyorgyi19}\\
 $2$ & $a_j = 1$, $hme$ & weakly NP-hard & Theorem~\ref{thm:nph1} of this paper\\
\hline 
arbitrary & $w_j = p_j = a_j$  & 2-approx algo.    & \cite{Gyorgyi19} \\
 constant &  $w_j = p_j$  & PTAS & \cite{Gyorgyi19} \\
 2        &   $-$            & FPTAS & \cite{kis15}\\
 arbitrary &$a_j=\bar{a}$, $w_j = \bar{w}$  & 2-approx algo. & Theorem~\ref{thm:2approx_a=w=1} of this paper\\
 constant &$a_j=\bar{a}$  & FPTAS & Theorem~\ref{thm:fptas} of this paper\\
 constant &$a_j=\bar{a}$, $hme$, $h=const$  & FPTAS & Theorem~\ref{thm:fptas_hme} of this paper\\
arbitrary & $p_j = 1$, $a_j=w_j$  & 3-approx algo.  & Theorem~\ref{thm:3approx} of this paper\\
 2 & $p_j = 1$, $a_j=w_j$  & 2-approx algo.  & Theorem~\ref{thm:2approx} of this paper\\
\hline
\end{tabular}
\end{small}
\end{center}
\end{table}

\subsection{Notation}
\label{sec:notation}
\begin{tabular}{|l|l|}
\hline
$n$ & number of jobs \\
$q$ & number of supply periods\\
$j$ & job index\\
$\ell$ & index of supply \\
$p_j$ & processing time of job $j$\\
$\bar{a}$ & common resource requirement of the jobs \\
$u_\ell$ &  the  $\ell^{th}$ supply time point\\ 
$\tilde{b}_\ell$ & quantity supplied at time point  $u_\ell$\\
$b_\ell$ & total resource supply over the first $\ell$ supplies, i.e.,  $\sum_{k=1}^\ell \tilde{b}_k$\\
$n_\ell$ & total number of jobs that can be served from the first $\ell$ supplies\\
$OPT$ & optimum objective function value of a scheduling problem\\
\hline
\end{tabular}

\medskip

If all the jobs have the same resource requirement $\bar{a}$, we can determine in advance the total number of jobs that can be served from the first $\ell$ supplies. That is, $n_\ell = \lfloor b_{\ell} / \bar{a} \rfloor$. Since $n\bar{a} = \sum_{j=1}^n a_j =   b_q$, we have $n_q = n$.

\section{Previous work}\label{sec:prev_work}
The total weighted completion time objective in a single machine environment without additional resource constraints ($1||\sum w_jC_j$) is solvable in polynomial time, a classical result of \cite{Smith56}.
This objective function is studied in several papers, see e.g., \cite{Hall97}, \cite{Bachman02}, or \cite{Liu19}.

Non-renewable resource constraints in the context of machine scheduling has been introduced by \cite{Carlier84}, and by \cite{Slowinski84}.
For the total weighted completion time objective,  Carlier proved that $1|nr=1|\sum w_jC_j$ is strongly NP-hard.
This result was repeated by \cite{Gafarov11}, who also examined a variant where the supply dates are equidistant and each supplied amount $\tilde{b}_\ell$ is the same.
In \citep{kis15}, it is proved that the problem is still NP-hard (in the weak sense) if there are only two supplies, and also an FPTAS is devised  for this special case.
In a recent paper, \cite{Gyorgyi19} discuss some polynomially solvable special cases of $1|nr=1|\sum w_j C_j$, and identify new NP-hard variants, e.g., when $p_j=w_j=a_j$ for each job $j$.
That paper also describes a 2-approximation algorithm for the above variant, and a PTAS when $p_j = w_j$, and the number of supplies ($q$) is a constant.

There are several papers for the makespan and the maximum lateness objective.
For instance, in \cite{Toker91} it is proved that the single machine makespan minimization problem is equivalent to the two machine flowshop problem if the amount supplied at each time unit is the same, while  in \cite{Gyorgyi17}, the approximability of parallel machine scheduling under non-renewable resource constraints is investigated with  the makespan and the maximum lateness objectives.

According to our best knowledge, high-multiplicity scheduling problems were first examined by \cite{Psaraftis80}, and the term was coined by \cite{Hochbaum91}. 
We also refer to \cite{Grigoriev_phd}, where several high-multiplicity scheduling problems with non-renewable resource constraints are examined, but only for the makespan and for the maximum lateness objectives.

\section{Problem $1|nr=1,a_j=1,q=2, hme|\sum C_j$ is NP-hard}
In this section we prove Theorem~\ref{thm:nph1}.
In  that proof we will use the following lemma several times:

\begin{lemma} \label{lem:obj_hm}
Let $t$ be an arbitrary time point and $S$ an arbitrary feasible schedule of an instance of $1|nr=1, hme|\sum C_j$.
If there are $k$ jobs with the same processing time $p_j$ scheduled without idle time between them from time point $t$ on (in the time interval $\left[t, t+k\cdot p_j\right)$), then their contribution to the objective function value of $S$ is $kt+\binom{k+1}{2}\cdot p_j$.
\end{lemma}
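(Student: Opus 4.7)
The statement is essentially a direct arithmetic identity, so the proof plan is just to enumerate the completion times of the $k$ jobs and sum them.

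First I would list the completion times explicitly. Since the $k$ jobs all have the same processing time $p_j$ and are processed back-to-back from time $t$ with no idle gaps, the $i$-th of them (for $i = 1, \ldots, k$) starts at $t + (i-1) p_j$ and completes at $t + i \cdot p_j$. Note that the order in which the individual jobs from this block are listed is irrelevant to the sum, since they are interchangeable with respect to $p_j$ (and the lemma speaks only of their aggregate contribution to $\sum_j C_j$).

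Next I would sum these completion times:
\[
\sum_{i=1}^{k} \bigl(t + i \cdot p_j\bigr) \;=\; k t + p_j \sum_{i=1}^{k} i \;=\; k t + p_j \cdot \frac{k(k+1)}{2} \;=\; k t + \binom{k+1}{2} p_j,
\]
which matches the claimed expression.

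There is no real obstacle here: the result follows immediately from the formula for the sum of the first $k$ positive integers. The only thing worth stressing in the write-up is the implicit convention that the block is processed without interruption, so that the $i$-th completion time in the block is exactly $t + i p_j$; this is precisely the hypothesis that the jobs occupy the interval $[t, t + k p_j)$ with no idle time. The lemma will be applied repeatedly in the NP-hardness reduction to compute, in closed form, the contribution of large homogeneous blocks of identical jobs arising from the high-multiplicity encoding.
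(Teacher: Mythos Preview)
Your proof is correct and essentially identical to the paper's: both enumerate the completion times as $t + i\cdot p_j$ for $i=1,\ldots,k$ and sum using $\sum_{i=1}^k i = \binom{k+1}{2}$.
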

\begin{proof}
Let $j'$ be the job in position $k'$ ($1\leq k'\leq k$) among the jobs specified in the statement of the lemma. 
Since the completion time of $j'$ is $t+k'\cdot p_j$,  the contribution of these jobs to the objective is 
\[
\sum_{k'=1}^k \left(t+k'\cdot p_j\right)= kt + \left(\sum_{k'=1}^k k'\right)\cdot p_j=kt+\binom{k+1}{2}\cdot p_j.
\]\qed
\end{proof}

\begin{proof}[Proof of Theorem~\ref{thm:nph1}]

We reduce the NP-hard EQUAL-CARDINALITY-PARTITION to the scheduling problem in the statement of the theorem. An instance of this problem is characterized by a positive even integer number $n$, and a set of $n$ items  with item sizes $e_1,\ldots,e_n \in \mathbb{Z}_{\geq 0}$ such that $\sum_i e_i=2A$ for some integer $A$, and $e_i\leq 2^{n^2}$ (the last inequality follows from the proof of NP-hardness from \cite{Garey79}).
Question:  is there a subset $H$ of the items such that $|H|=n/2$ and $\sum_{i\in H} e_i=A$?

Let $I$ be an instance of EQUAL-CARDINALITY-PARTITION, we construct an instance $I'$ of the scheduling problem as follows. There are $n'=2\cdot 200^{n^2}+n$ jobs and two supply dates, $u_1=0$ and $u_2=(n/2)\cdot 20^{n^2}+A$ with $b_1=n/2$ and $b_2=2\cdot 200^{n^2} + n/2$, respectively.
$J_1,J_2,\ldots,J_n$ are so called {\em medium\/} jobs with $p_j=20^{n^2}+e_j$ ($j=1,2,\ldots,n$).
We have $200^{n^2}$ small jobs with $p_j=1$, and $200^{n^2}$ big jobs with  $p_j=200^{n^2}$.
Let $\J_s, \J_m$ and $\J_b$ denote the set of small, medium and big jobs, respectively.
The question is if there exists a feasible schedule of total job completion time at most $V' = V_s+V_m+V_b$, where
\begin{align*}
V_s & :=200^{n^2}u_2+\binom{200^{n^2}+1}{2},\\
V_m & :=2\cdot (20^{n^2}+A)\cdot \binom{n/2+1}{2}+(u_2+200^{n^2})\cdot n/2,\\
V_b& :=200^{n^2}\cdot (u_2+200^{n^2}+20^{n^2}\cdot n/2+A)+200^{n^2}\cdot \binom{200^{n^2}+1}{2}\\
 &=200^{n^2}\cdot\sum_{j\in \J_m\cup \J_s}p_j + 200^{n^2}\cdot \binom{200^{n^2}+1}{2}.
\end{align*}

If  the answer to $I$ is "yes", i.e., there is a subset $H$ of $n/2$ items of total size $A$. Then  consider the following schedule for $I'$: schedule the medium jobs corresponding to the elements of $H$ from $0$ to $u_2$ in non-decreasing $p_j$ order, then schedule the remaining jobs in non-decreasing $p_j$ order from $u_2$, i.e., there are small jobs in the time interval $\left[u_2, u_2+200^{n^2}\right]$, medium jobs in the time interval $\left[u_2+200^{n^2}, u_2+200^{n^2}+20^{n^2}\cdot n/2 +A\right]$, and then the big jobs are in the time interval 
\[
\begin{split}
&\left[\sum_{j\in\J_m\cup \J_s}p_j,\sum_{j\in\J_b\cup\J_m\cup \J_s}p_j\right] =\\
&\left[u_2+200^{n^2}+20^{n^2}\cdot n/2+A, u_2+200^{n^2}+20^{n^2}\cdot n/2+A + 200^{2{n^2}}\right], \end{split}
\]
see Figure \ref{fig:np_opt}.
Note that due to Lemma~\ref{lem:obj_hm} the contribution of the small jobs to the objective function value is $V_s$, that of the medium jobs is at most $V_m$, while the contribution of the big jobs is $V_b$.
Therefore, the total job completion time of this schedule is  at most $V'$.

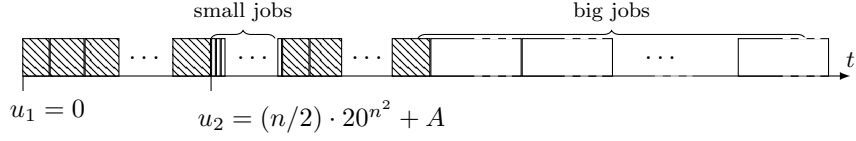
\begin{figure}
\begin{tikzpicture}
\def\ox{0} 
\def\oy{0} 
\def\ui{0}
\def\uii{2.5}
\coordinate(o) at (\ox,\oy); 
\coordinate(u1) at (\ui,\oy);
\coordinate(u2) at (\uii,\oy);

\tikzstyle{mystyle}=[draw, minimum height=0.5cm,rectangle, inner sep=0pt,font=\scriptsize]

\def\tl{11.0} 
\def\oyi{0}
\draw [-latex](\ox,\oyi) node[above left]{} -- (\ox+\tl,\oyi) node[above,font=\small]{$t$};

\coordinate (uq2) at (\uii,\oyi);
\coordinate (uq1) at (\ui,\oyi);
\draw[] (uq2) -- ($(uq2)-(0,0.2)$) node[below right=0cm and -0.3cm] {$u_2=(n/2)\cdot 20^{n^2}+A$}; 
\draw[] (o) -- ($(o)-(0,0.2)$) node[below right=0cm and -0.3cm] {$u_{1}=0$};

\def\pi{0.7}
\node(b1) [above right=-0.01cm and -0.01cm of o,mystyle,pattern=north west lines, minimum width=0.35 cm]{};
\node(b3) [right=0cm of b1,mystyle,pattern=north west lines, minimum width=0.45cm]{};
\node(b3) [right=0cm of b3,mystyle,pattern=north west lines, minimum width=0.45 cm]{};
\node(b3) [right=0.0cm of b3, minimum width=0.1 cm]{$\ldots$};
\node(b3) [above left=-0.01cm and -0.00cm of uq2,mystyle,pattern=north west lines, minimum width=0.5cm]{};

\draw [decorate,decoration={brace,amplitude=3pt},xshift=0pt,yshift=0pt]
(\uii cm,0.5cm) -- (3.35cm,0.5cm) node [above=0.1cm,black,midway]{\footnotesize small jobs};
\node(b2) [right=0.0cm of b3,mystyle, minimum width=0.05 cm]{};
\node(b3) [right=0cm of b2,mystyle, minimum width=0.05 cm]{};
\node(b3) [right=0cm of b3,mystyle, minimum width=0.05 cm]{};
\node(b3) [right=0.03cm of b3, minimum width=0.0 cm]{$\ldots$};
\node(b3) [right=0cm of b3,mystyle, minimum width=0.05 cm]{};
\node(b1) [right=0cm of b3,mystyle,pattern=north west lines, minimum width=0.35 cm]{};
\node(b3) [right=0cm of b1,mystyle,pattern=north west lines, minimum width=0.42cm]{};
\node(b3) [right=0.0cm of b3, minimum width=0.1 cm]{$\ldots$};
\node(b3) [right=0.0cm of b3,mystyle,pattern=north west lines, minimum width=0.5cm]{};
\draw [decorate,decoration={brace,amplitude=3pt},xshift=0pt,yshift=0pt]
(5.25cm,0.5cm) -- (10.4cm,0.5cm) node [above=0.1cm,black,midway]{\footnotesize big jobs};
\node(b1) [right=0cm of b3,mystyle, minimum width=1.2 cm]{};
\node(b3) [right=0cm of b1,mystyle, minimum width=1.2cm]{};
\node(b3) [right=0.3cm of b3, minimum width=0.1 cm]{$\ldots$};
\node(b3) [right=0.7cm of b3,mystyle, minimum width=1.2cm]{};

\draw[dashed, white] (6,0)--(6.5,0);
\draw[dashed, white,thick] (6,0.5)--(6.5,0.5);
\draw[dashed, white] (7.2,0)--(7.7,0);
\draw[dashed, white,thick] (7.2,0.5)--(7.7,0.5);
\draw[dashed, white] (8.4,0)--(8.9,0);
\draw[dashed, white] (10.1,0)--(10.6,0);
\draw[dashed, white,thick] (10.1,0.5)--(10.6,0.5);
\end{tikzpicture}
\caption{If the answer to $I$ is "yes" then the value of $S$ is at most $V'$. The medium jobs are hatched.}\label{fig:np_opt}
\end{figure}

If the answer to $I$  is "no", then we claim that any feasible schedule has a larger objective function value than $V'$.
Let $S^*$ denote an arbitrary optimal schedule.
We distinguish three cases:

\noindent{\em Case 1.\/} There is a big job that starts before $u_2$. 
Then necessarily all the other big jobs must be scheduled after all the small and medium jobs at the end of $S^*$. 
The contribution of the big jobs is at least $200^{n^2}+(200^{n^2}-1)\cdot (200^{n^2}+\sum_{j\in \J_s\cup \J_m}p_j) + 200^{n^2}\cdot\binom{200^{n^2}}{2}
\geq V_b-\sum_{j\in \J_m\cup \J_s}p_j$.

There are at most $n/2-1$ other jobs that start before $u_2$ in $S^*$, thus there are at least $200^{n^2}+(n/2+1)$ small and medium jobs that start after the first big job, i.e., not earlier than  $200^{n^2}$.
The contribution of these jobs to the optimum value is at least $(200^{n^2}+(n/2+1))\cdot 200^{n^2}+\binom{200^{n^2}+(n/2+1)+1}{2}$. 
This is clearly larger than $V_s + V_m + \sum_{j\in \J_m\cup \J_s}p_j$, thus the objective value of $S^*$ is larger than $V'$.
\vskip 5pt

\noindent{\em Case 2.\/} There are $k\geq 1$ small jobs that start before $u_2$, but each big job starts after $u_2$ in $S^*$. Then at least $n/2+k$ medium jobs start after $u_2$ (since at most $n/2$ jobs may start before $u_2$ by the resource constraint), and the machine must be idle in $[u_2-k\cdot 20^{n^2},u_2]$. 
This means that each big job starts not earlier than 
$\sum_{j\in \J_m\cup \J_s}p_j+k\cdot 20^{n^2}$ in $S^*$, because these jobs can start after all small and medium jobs in an optimal schedule.
Hence, the contribution of the big jobs to the objective function value of $S^*$ is at least $V_b+k\cdot 200^{n^2} \cdot 20^{n^2}$.

The contribution of the small and medium jobs is at least $\binom{k+1}{2}+(n/2-k)\cdot k+ \binom{n/2-k+1}{2}\cdot 20^{n^2}+(200^{n^2}-k)u_2+\binom{200^{n^2}-k+1}{2}+(n/2+k)\cdot (u_2+200^{n^2}-k)+\binom{n/2+k+1}{2}\cdot 20^{n^2}$, because in an optimal schedule the jobs are ordered in each period in a non-decreasing $p_j$ order, i.e., both before and after $u_2$, the small jobs precede the medium jobs.
Note that, $\binom{k+1}{2}+(200^{n^2}-k)u_2+\binom{200^{n^2}-k+1}{2}\geq V_s-k\cdot (u_2+200^{n^2})$, while $(n/2-k)\cdot k+ \binom{n/2-k+1}{2}\cdot 20^{n^2}+(n/2+k)\cdot (u_2+200^{n^2}-k)+\binom{n/2+k+1}{2}\cdot 20^{n^2}\geq V_m-n^2A$.
Therefore, the objective function value of $S^*$ is at least $V_b+V_m+V_s+k\cdot 200^{n^2} \cdot 20^{n^2}-n^2A-k\cdot (u_2+ 200^{n^2})>V'$.
\vskip 5pt

\noindent{\em Case 3.\/} 
Only  medium jobs start before $u_2$. Then the remaining jobs are scheduled  in non-decreasing processing time order after the maximum of $u_2$ and the completion of the jobs started before $u_2$.
Since the answer to $I$ is  "no",  either there is a medium job that starts before, but finishes after $u_2$, or there is idle time before $u_2$.
In the former  case, the big jobs  are scheduled in the time interval $\left[\sum_{j\in\J_m\cup \J_s}p_j,\sum_{j\in\J_b\cup\J_m\cup \J_s}p_j\right]$, which means that their contribution to the objective function value is $V_b$.
Observe that in this case exactly $n/2$ medium jobs start before $u_2$, because the total processing time of any $n-1$ medium jobs is less than $u_2$ and there can be at most $n/2$ jobs scheduled before $u_2$ due to the resource constraint of the first period.
The contribution of the medium jobs is at least $2\cdot\binom{n/2+1}{2}\cdot 20^{n^2}+n/2\cdot (u_2+200^{n^2})=V_m-2A\cdot\binom{n/2+1}{2}$, because there are $n/2$ medium jobs that start not earlier than $u_2+200^{n^2}$ (after all small jobs are scheduled after $u_2$). 
The contribution of the small jobs is at least $(200^{n^2}+1)u_2+\binom{200^{n^2}+1}{2}=V_s+200^{n^2}$, because these jobs are scheduled after the last medium job of the first period completes, i.e., not earlier than $200^{n^2}+1$.
Since $200^{n^2}>2A\cdot\binom{n/2+1}{2}$, the value of $S^*$ is larger than $V'=V_b+V_m+V_s$.

Finally, if there is idle time before $u_2$ in $S^*$, then we can suppose that the number of the medium jobs that start before $u_2$ is $n/2$, otherwise the objective value of $S^*$ could be decreased by scheduling a small job before $u_2$, which contradicts the optimality of this schedule.
Since the machine is idle in $[u_2-1,u_2]$, the big jobs cannot start before $\sum_{j\in\J_m\cup \J_s}p_j+1$, therefore their contribution to the objective is at least  $V_b+200^{n^2}$.
The small jobs are scheduled in $\left[u_2,u_2+200^{n^2}\right]$, thus their contribution is exactly $V_s$.
The contribution of the medium jobs is at least $2\cdot (20^{n^2})\cdot \binom{n/2+1}{2}+(u_2+200^{n^2})\cdot n/2=V_m-2A\cdot \binom{n/2+1}{2}$, thus the objective function value of $S^*$ is at least $V_b+V_m+V_s+200^{n^2}-2A\cdot \binom{n/2+1}{2}$, which is clearly larger than $V'=V_b+V_m+V_s$. \qed

\end{proof}

\section{A factor 2 approximation algorithm for $1|nr=1,a_j=\bar{a}|\sum C_j$}

In this section we describe a simple factor 2 approximation algorithm for  the problem $1|nr=1,a_j=\bar{a}|\sum C_j$.
The algorithm is based on list scheduling, i.e., scheduling the jobs in a given order and introducing a gap only if no more material is available when scheduling the next job.

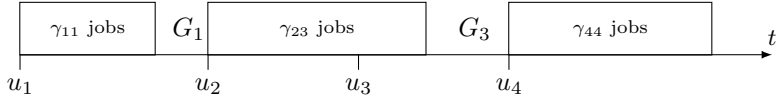
\begin{figure}
\begin{tikzpicture}
\def\ox{0} 
\def\oy{0} 
\def\ui{0}
\def\uii{2.5}
\def\uiii{4.5}
\def\uiv{6.5}
\coordinate(o) at (\ox,\oy); 
\coordinate(u1) at (\ui,\oy);
\coordinate(u2) at (\uii,\oy);
\coordinate(u3) at (\uiii,\oy);
\coordinate(u4) at (\uiv,\oy);

\tikzstyle{mystyle}=[draw, minimum height=0.7cm,rectangle, inner sep=0pt,font=\scriptsize]

\def\tl{10} 
\draw [-latex](\ox,\oy) node[above left]{} -- (\ox+\tl,\oy) node[above,font=\small]{$t$};

\draw[] (o) -- ($(o)-(0,0.2)$) node[below right=0cm and -0.3cm] {$u_{1}$};
\draw[] (u2) -- ($(u2)-(0,0.2)$) node[below right=0cm and -0.3cm] {$u_2$};
\draw[] (u3) -- ($(u3)-(0,0.2)$) node[below right=0cm and -0.3cm] {$u_3$};
\draw[] (u4) -- ($(u4)-(0,0.2)$) node[below right=0cm and -0.3cm] {$u_4$};

\node(b1) [above right=-0.01cm and -0.01cm of o,mystyle, minimum width=1.8 cm]{$\gamma_{11}$ jobs};
\node(b3) [right=0.1cm of b1, minimum width=0.5cm]{$G_1$};
\node(b1) [above right=-0.01cm and -0.01cm of u2,mystyle, minimum width=2.9 cm]{$\gamma_{23}$ jobs};
\node(b3) [right=0.3cm of b1, minimum width=0.5cm]{$G_3$};
\node(b1) [above right=-0.01cm and -0.01cm of u4,mystyle, minimum width=2.7 cm]{$\gamma_{44}$ jobs};
\end{tikzpicture}
\caption{Illustration of list scheduling for $1|nr=1, a_j = \bar{a}|\sum C_j$.}\label{fig:2approx_aw1}
\end{figure}

In order to prove that the list-scheduling algorithm using the non-decreasing processing-time order is a factor 2 approximation algorithm we need some additional definitions.
Recall the definition of $n_\ell$ from Section~\ref{sec:notation}.
Let $\nu_{k\ell}$ denote the difference $n_\ell-n_k$ for $0\leq k\leq \ell\leq q$, where $n_0 := 0$.
Let $SPT$ denote the optimum value of the problem $1||\sum C_j$, i.e., minimize the sum of job completion times on a single machine. $SPT$ can be obtained by scheduling the jobs in non-decreasing processing time order without any delays, a classical result of  \cite{Smith56}.
\begin{lemma}
For any problem instance of $1|nr=1,a_j=\bar{a}|\sum C_j$, the optimum  value $OPT$  admits the following lower bounds:
\begin{enumerate}[(i)]
\item $SPT \leq OPT$, \label{case1}
\item $\sum_{\ell=2}^q u_\ell\cdot \nu_{\ell-1,\ell} \leq OPT$. \label{case2}
\end{enumerate}
\end{lemma}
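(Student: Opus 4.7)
The plan is to prove the two bounds separately, as they come from very different relaxations.

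For (\ref{case1}), the idea is simply that the resource constraints only shrink the set of feasible schedules. Any schedule feasible for $1|nr=1,a_j=\bar a|\sum C_j$ is also feasible for the unconstrained problem $1||\sum C_j$, and has the same objective value. Consequently the optimum of the unconstrained problem, which equals $SPT$ by Smith's rule, is at most $OPT$.

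For (\ref{case2}), the plan is to work with an arbitrary (say optimal) feasible schedule and relabel the jobs by the order in which they start: let $S_{[k]}$ denote the start time of the $k$-th job in this order. The key structural observation is the resource constraint: at any time strictly before $u_{\ell+1}$, the total supply available is $b_\ell$, so at most $n_\ell=\lfloor b_\ell/\bar a\rfloor$ jobs can have been started. Turned around, this says that whenever $k>n_\ell$, we must have $S_{[k]}\ge u_{\ell+1}$. Hence for each $\ell\in\{1,\dots,q\}$ and each index $k$ with $n_{\ell-1}<k\le n_\ell$, one has $S_{[k]}\ge u_\ell$.

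Summing these bounds over all $k=1,\dots,n$ and grouping by $\ell$ yields
\[
\sum_{k=1}^n S_{[k]} \;\ge\; \sum_{\ell=1}^q (n_\ell-n_{\ell-1})\,u_\ell \;=\; \sum_{\ell=2}^q u_\ell\,\nu_{\ell-1,\ell},
\]
where the last equality uses $u_1=0$ and the definition of $\nu_{\ell-1,\ell}$. Finally, since $C_j=S_j+p_j\ge S_j$ and $p_j>0$, the optimum satisfies $OPT=\sum_j C_j\ge\sum_j S_j$, and combining these two inequalities gives the claim. There is no real obstacle here; the only thing to be careful about is the indexing of the resource bound (strict vs.\ non-strict inequality in "at most $n_\ell$ jobs can start before $u_{\ell+1}$"), which follows directly from the feasibility definition given in the introduction.
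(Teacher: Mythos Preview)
Your proof is correct and follows essentially the same approach as the paper's. For part~(\ref{case1}) both proofs invoke the relaxation to $1||\sum C_j$. For part~(\ref{case2}), the paper groups jobs by the supply period in which they start, writes $OPT>\sum_\ell u_\ell\gamma^*_\ell$ where $\gamma^*_\ell$ counts jobs starting in period~$\ell$, and then argues via a zero-processing-time relaxation that $\sum_\ell u_\ell\gamma^*_\ell\ge\sum_\ell u_\ell\nu_{\ell-1,\ell}$. Your argument is the same counting in slightly different packaging: instead of aggregating by period and invoking a secondary relaxation, you order the jobs by start time and bound each $S_{[k]}$ individually via the observation that $k>n_{\ell-1}$ forces $S_{[k]}\ge u_\ell$. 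This is arguably a touch more direct, since it bypasses the intermediate quantity $\gamma^*_\ell$ and the separate optimality argument for the zero-processing-time problem, but the underlying idea---the resource constraint caps how many jobs can start before each $u_\ell$---is identical.
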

\begin{proof}
Since $1||\sum C_j$ is a relaxation of $1|nr=1,a_j=\bar{a}|\sum C_j$, (\ref{case1}) follows immediately.
As for (\ref{case2}), fix an instance of $1|nr=1,a_j=\bar{a}|\sum C_j$ and consider an optimal schedule for that instance. Let $C^*_j$ denote the completion time of job $j$ in the optimal schedule. We can express each $C^*_j$ as the sum of a supply time point $u_{\ell(j)} \in \{u_1,\ldots,u_q\}$ and some integer number $v_j$ such that $C^*_j = u_{\ell(j)} + v_j$ and $u_{\ell(j)} \leq C^*_j - p_j < u_{\ell(j)+1}$ (i.e., job $j$ starts in supply period $\ell$).
Let $\gamma^*_\ell$ be the number of those jobs that start in supply period $\ell$ in the optimal schedule.
Then we have
\begin{equation}
OPT = \sum_{j=1}^n C^*_j = \sum_{j=1}^n (u_{\ell(j)} + v_j) > \sum_{j=1}^n u_{\ell(j)} = \sum_{\ell = 1}^q u_\ell\cdot \gamma^*_\ell \geq \sum_{\ell=1}^q u_\ell\cdot \nu_{\ell-1,\ell},\label{eq:lb}
\end{equation}
where only the last inequality needs justification.
Consider a relaxed problem, where all the job processing times are set to 0.
Then the optimal schedule for $1|nr=1,a_j=\bar{a}|\sum C_j$ is just a feasible schedule for this relaxed problem with objective function value $\sum_{\ell = 1}^q u_\ell\cdot \gamma^*_\ell$, whereas in the optimal solution, $\nu_{\ell-1,\ell}$ jobs are scheduled at time point $u_\ell$.
This immediately gives the last inequality in (\ref{eq:lb}).
\qed
\end{proof}

\begin{proof}[of Theorem \ref{thm:2approx_a=w=1}]
The main idea of the proof is that we express the objective function value of the schedule $S_{Alg}$ obtained by the algorithm using the gaps in the schedule and then we upper bound it appropriately.
Let $G_\ell$ be the idle time or gap in the interval $[u_\ell, u_{\ell+1})$ in the schedule $S_{Alg}$, and $C_j$ the completion time of job $j$. Let $\gamma_{k\ell}$ denote the number of jobs that start in the interval $[u_k,u_{\ell+1})$, see Figure~\ref{fig:2approx_aw1}. Furthermore, let $\{\ell_1,\ldots,\ell_t\}$ be the set of supply period indices such that $G_{\ell_i-1} > 0$ (there is a gap in the schedule in supply period $\ell-1$), and suppose $\ell_i < \ell_{i+1}$ for $1\leq i< t$. If this set is not empty, then  $\ell_1 \geq 2$ must hold by definition.
We also define $\ell_{t+1} := q+1$.
Then we have
\[\begin{split}
\sum_{j=1}^n C_j & = SPT + \sum_{i=1}^t  \left(\sum_{k=1}^{\ell_i} G_k\right) \cdot \gamma_{\ell_i,\ell_{i+1}-1} \leq SPT + \sum_{i=1}^t  u_{\ell_i} \cdot \gamma_{\ell_i,\ell_{i+1}-1}\\
& \leq SPT + \sum_{i=1}^t  u_{\ell_i} \cdot \nu_{\ell_i-1,\ell_{i+1}-1} \leq SPT + \sum_{\ell=1}^t u_{\ell}\cdot \nu_{\ell-1, \ell} \leq 2\cdot OPT,
\end{split}
\]
where the first equation is based on the same idea as we used in the previous lemma, the second inequality uses the obvious fact that the total idle time before $u_\ell$ is at most $u_\ell$, the third inequality exploits that for each $i$, there is a gap right before $u_{\ell_i}$, and before $u_{\ell_{i+1}}$ as well, the fourth from $\nu_{\ell_i-1, \ell_{i+1}-1} = \sum_{k=\ell_i}^{\ell_{i+1}-1} \nu_{k-1,k}$, and the last from the previous lemma. \qed
\end{proof}

Finally, we mention that the above list scheduling algorithm can be extended the $hme$-input and the same approximation guarantee can be proved.

\section{FPTAS for $1|nr=1, a_j = \bar{a}, q=const, hme, h=const|\sum w_j C_j$}

\subsection{FPTAS for normal input }

Firstly, we describe a dynamic program, and then we sketch how to turn it into an FPTAS.
We assume that the jobs are indexed in non-increasing $w_j / p_j$  order, i.e., $w_1/p_1 \geq \cdots \geq w_n / p_n$.

Our dynamic program is defined by an acyclic graph, where the nodes represent states, and the edges the transitions between them. Each state $\sigma$ is a $4q$ tuple $(N_1^\sigma,\ldots,N_q^\sigma; P_1^\sigma,\ldots,P_q^\sigma; W_1^\sigma,\ldots,W_q^\sigma; WP^\sigma_1,\ldots, WP^ \sigma_q) \in \mathbb{R}^{4q}$, where $N_\ell^\sigma$, $P_\ell^\sigma$, $W_\ell^\sigma$, and $WP_\ell^\sigma$ represent the total number, the total processing time, the total weight, and the total weighted completion time  (if started at time 0) of those jobs assigned to the supply period $\ell$. 
Note that if $j_1,\ldots,j_k$ are the jobs assigned to supply period $\ell$ in state $\sigma$ such that $j_1 < j_2 < \cdots < j_k$, then $WP_{\ell}^\sigma = \sum_{i=1}^k w_{j_i} \left(\sum_{s=1}^i p_{j_s}\right)$. The initial state is the all 0 vector.
Consider any state $\sigma = (N_1^\sigma,\ldots,N_q^\sigma; P_1^\sigma,\ldots,P_q^\sigma; W_1^\sigma,\ldots,W_q^\sigma; WP^\sigma_1,\ldots,WP^\sigma_q)$ with $\sum_{\ell'=1}^\ell N_{\ell'}^\sigma \leq n_\ell$ for each $\ell \in \{1,\ldots,q\}$, and with strict inequality for at least one $\ell$. Let $j := \sum_{\ell=1}^q N_{\ell}^\sigma + 1$ the index of the next job to be scheduled. For each $\ell$ such that $\sum_{\ell'=1}^\ell N_{\ell'}^\sigma < n_{\ell}$, we define a successor state $\sigma'$, unless it is already defined, as follows.
In $\sigma'$, the values of $N_\ell^{\sigma'}$, $P_\ell^{\sigma'}$, $W_\ell^{\sigma'}$ and $WP_\ell^{\sigma'}$ are computed as $N_\ell^{\sigma} + 1$, $P_\ell^{\sigma} + p_j$, $WP^\sigma_\ell + w_j$, and
$WP_{\ell}^{\sigma} + (P^{\sigma}_\ell+p_j)\cdot w_j$, respectively; while all other components are inherited from $\sigma$.
The {\em terminal\/} states are those $\sigma$ with $\sum_{\ell=1}^q N_{\ell}^\sigma = n$.
The objective function value of a terminal state is computed as
\begin{equation}
\operatorname{value}(\sigma) := \sum_{\ell=1}^q \left(WP_\ell^\sigma + \max\left\{u_\ell, \max_{\ell'<\ell}\left(u_{\ell'}+\sum_{k=\ell'}^{\ell-1} P_k^\sigma\right) \right\}\cdot W_\ell^\sigma\right).
\label{eq:statevalue}
\end{equation}
Note that in the above expression, $WP_\ell^\sigma + \max\left\{u_\ell, \max_{\ell'<\ell}u_{\ell'}+\sum_{k=\ell'}^{\ell-1} P_k^\sigma \right\}\cdot W_\ell^\sigma$ expresses the total weighted completion time of those jobs assigned to supply period $\ell$. To see this, observe that in $\sigma$, the first job in supply period $\ell$ starts at $t_\ell := \max\left\{u_\ell, \max_{\ell'<\ell}\left(u_{\ell'}+\sum_{k=\ell'}^{\ell-1} P_k^\sigma \right) \right\}$. Since the total weight of those jobs assigned to supply period $\ell$ is $W_\ell^\sigma$, $WP^\sigma_\ell$ must be increased by $t_\ell \cdot W^\sigma_\ell$ to get the total weighted completion time of these jobs, and this yields the formula (\ref{eq:statevalue}).

We determine all the terminal states, and choose the best one, i.e., with the smallest value.
Note that resource feasibility is ensured by the definition of $n_\ell$ and the fact that $\sum_{k=1}^\ell N_{k}^\sigma \leq n_\ell$ in each (terminal) state $\sigma$.

We claim that the running time of this procedure is pseudo polynomial. To see this, notice that any number in any state can be bounded by $(n\cdot \operatorname{MAXNUM})^2$, where $\operatorname{MAXNUM}$ is the maximum number in the input. Since $q$ is a constant, the number of states can be bounded by $\operatorname{SOL}(n,q)\cdot (n\operatorname{MAXNUM})^{5q}$, where $\operatorname{SOL}(n,q)$ is the total number of solutions of the Diophantine equation system 
$\sum_{k=1}^\ell N_k \leq n_\ell$, $\ell=1,\ldots,q$. This can be bounded by $n^q$. Therefore, the running time can be bounded by a polynomial in $n$ and $\operatorname{MAXNUM}$, if $q$ is a constant.
Therefore, we have proved the following:
\begin{lemma}
The problem $1|nr=1,q=const, a_j = \bar{a}| \sum w_j C_j$ can be solved in pseudo-polynomial time.
\end{lemma}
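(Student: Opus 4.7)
The plan is to verify the two claims implicit in the lemma: first, that the dynamic program described above correctly computes the optimum; and second, that its running time is bounded by a polynomial in $n$ and the largest input number $\operatorname{MAXNUM}$ whenever $q$ is constant.

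For correctness, I would first argue that restricting to assignments produced by the rule ``take the next job in non-increasing $w_j/p_j$ order and place it into some still-feasible period'' loses no optimality: within any single supply period the jobs form a contiguous block on the machine, and Smith's rule applied to this block shows that reordering it in non-increasing $w_j/p_j$ order cannot increase $\sum w_j C_j$. Hence some optimal schedule lists the jobs of each period in the global $w_j/p_j$ order, which is exactly what the DP enumerates. Resource feasibility of the visited assignments is enforced by the constraint $\sum_{\ell' \le \ell} N_{\ell'}^\sigma \le n_\ell$, matching the feasibility criterion of Section~\ref{sec:notation}. Finally, I would justify (\ref{eq:statevalue}) by showing that the bracket
\[
t_\ell \;=\; \max\!\left\{u_\ell,\ \max_{\ell'<\ell}\Bigl(u_{\ell'} + \sum_{k=\ell'}^{\ell-1} P_k^\sigma\Bigr)\right\}
\]
is the start time of the first job of supply period $\ell$ in the left-shifted schedule determined by the assignment in $\sigma$; a short induction on $\ell$ using $t_1 = u_1$ and $t_\ell = \max\{u_\ell, t_{\ell-1} + P_{\ell-1}^\sigma\}$ does the job. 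Given this, $WP_\ell^\sigma + t_\ell \cdot W_\ell^\sigma$ is exactly the contribution of supply period $\ell$ to $\sum w_j C_j$, so summing over $\ell$ gives the true objective.

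For the complexity, I would bound each coordinate of a state by a polynomial in $n \cdot \operatorname{MAXNUM}$: $N_\ell^\sigma \le n$, $P_\ell^\sigma, W_\ell^\sigma \le n \cdot \operatorname{MAXNUM}$, and $WP_\ell^\sigma \le n^2 \cdot \operatorname{MAXNUM}^2$. Since each state is a $4q$-tuple of non-negative integers, the total number of reachable states is at most $(n \cdot \operatorname{MAXNUM})^{O(q)}$; each admits at most $q$ outgoing transitions constructible in $O(q)$ time, and each terminal's value is computable in $O(q^2)$ time via (\ref{eq:statevalue}). With $q$ fixed these factors collapse to constants, yielding a running time polynomial in $n$ and $\operatorname{MAXNUM}$. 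The main obstacle I foresee is the correctness half---specifically, combining the within-period Smith-rule exchange with the induction establishing the identity for $t_\ell$ so as to certify that the DP truly optimises over \emph{all} resource-feasible assignments; once these pieces are in place, the complexity analysis is routine bookkeeping.
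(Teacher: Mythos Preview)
Your proposal is correct and follows essentially the same approach as the paper: the identical $4q$-dimensional state dynamic program over jobs in non-increasing $w_j/p_j$ order, the same feasibility check via $\sum_{\ell'\le\ell} N_{\ell'}^\sigma \le n_\ell$, the same objective formula (\ref{eq:statevalue}), and the same state-counting argument for the pseudo-polynomial bound. You spell out the correctness half (Smith's rule within each period, the inductive identity for $t_\ell$) in more detail than the paper does, but the underlying argument is the same.
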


Luckily, this pseudo-polynomial time algorithm can be turned into an FPTAS under the same conditions.

Let $\Delta = 1+\varepsilon/(2n)$. We shall use the following rounding function:
\[
r(v) =\left\{\begin{array}{ll}
0, & \textnormal{if } v = 0\\
\Delta^{\lceil \log_{\Delta} v\rceil}, & \textnormal{if } v > 0.
\end{array}
\right.
\]
A notable property of this function is that if $v_1,\ldots, v_t$ is a sequence of $t \leq n$ non-negative numbers, and $g_i = r(v_i + g_{i-1})$, where $g_0 = 0$, then
\begin{align}
\sum_{j=1}^i v_j \leq g_i \leq (1+\varepsilon)\sum_{j=1}^i v_j,\quad i=1,\ldots,t.\label{eq:r_bound}
\end{align}
The first inequality follows from $g(v) \geq v$, and the second from  $(1 + \alpha/n)^n \leq e^\alpha\leq 1+2\alpha$ for $0\leq \alpha < 1$, see \cite{schuurman2007approximation}.
In the following algorithm we modify the above dynamic program by rounding the states.
A state is {\em non-terminal\/} if at least one job is not assigned to a supply period in it.
Consider the following algorithm:
\begin{enumerate}[1.]
\item The initial state is the $4q$-dimensional 0 vector. 
The successors of a non-terminal state $\sigma = (N_1^{\sigma},\dots,N_q^{\sigma}; \tilde{P}_1^{\sigma},\dots,\tilde{P}_q^{\sigma}; \tilde{W}_1^{\sigma},\dots,\tilde{W}_q^{\sigma}; \tilde{WP}^\sigma_1,\dots,\tilde{WP}^\sigma_q)$ are computed as follows. If $\sum_{\ell'=1}^{\ell} N_{\ell'}^{\sigma} < n_\ell$, then job $j= 1+\sum_{\ell'=1}^q N_{\ell'}^{\sigma}$ can be assigned to supply period  $\ell$.
The components of the corresponding state $\sigma'$ are inherited from $\sigma$, except 
 $N_\ell^{\sigma'}$, $\tilde{P}_\ell^{\sigma'}$, $\tilde{W}_\ell^{\sigma'}$, and $\tilde{WP}_\ell^{\sigma'}$, which are computed as  $N_\ell^{\sigma}+1$, $r(\tilde{P}_\ell^{\sigma}+p_j)$, $r(\tilde{W}_\ell^{\sigma}+w_j)$ and $r(\tilde{WP}_\ell^{\sigma}+w_j (\tilde{P}^{\sigma}+p_j))$, respectively.
\item After computing the (rounded) terminal states of this dynamic program, we take a terminal state $\sigma'$ of smallest value, and pick any path from the initial state leading to $\sigma'$. By following this path, a solution to the scheduling problem is  constructed, and this is the output of the algorithm. 
\end{enumerate}
Firstly, we observe that the output of the algorithm is a feasible schedule and it has a value at most  that of $\sigma'$ by the properties of the function $r(\cdot)$. Therefore, in order to show that the above algorithm  constitutes an FPTAS for the scheduling program, we have to show that for any $\varepsilon>0$, $\operatorname{value}(\sigma')$ is at most $(1+O(\varepsilon))$ times the optimum, and also provide a polynomial bound on the time complexity in terms of the size of the scheduling problem instance and $1/\varepsilon$.

Now consider the original (unrounded) dynamic program and a path $\Pi$ from the initial state to an optimal terminal state $\sigma^*$ (i.e., $\operatorname{value}(\sigma^*)$ takes the optimum value).
Clearly, the assignment of jobs to supply periods can be read out from $\Pi$, and a terminal state $\tilde{\sigma}$ of the rounded dynamic program is reached by making these assignments in the algorithm above. 
Since $\tilde{P}^{\tilde{\sigma}}_\ell \leq (1+\varepsilon) P^*_\ell$, $\tilde{W}^{\tilde{\sigma}}_\ell \leq (1+\varepsilon) W^*_\ell$, and $\tilde{WP}^{\tilde{\sigma}}_\ell \leq (1+\varepsilon) WP^*_\ell$ by (\ref{eq:r_bound}), we can bound the value of $\tilde{\sigma}$ as follows:
\[
\begin{split}
\operatorname{value}(\tilde{\sigma}) & \leq (1+\varepsilon)^2\sum_{\ell=1}^q \left(WP_\ell^{\sigma^*} + \max\left\{u_\ell, \max_{\ell'<\ell}\left(u_{\ell'}+\sum_{k=\ell'}^{\ell-1} P_k^{\sigma^*}\right) \right\}\cdot W_\ell^{\sigma^*}\right)\\
& < (1+3\varepsilon)\operatorname{value}(\sigma^*),
\end{split}
\]
that is, the value of $\tilde{\sigma}$ is at most $(1+3\varepsilon)$ times the optimum.
Since we pick the best solution of the rounded dynamic program, the output of the algorithm has the same approximation guarantee.

It remains to verify the time complexity of the dynamic program with the rounded states.
The crucial factor in determining the running time is the number of distinct values of the components $\tilde{P}_\ell$, $\tilde{W}_\ell$ and $\tilde{WP}_\ell$ of the rounded states.
Since $\log_\Delta \sum p_j = \ln \sum p_j / \ln \Delta \leq 4n \ln \sum p_j / \varepsilon$ for any $0 < \varepsilon < 1$, the number of distinct $\tilde{P}_\ell$ values is bounded by a polynomial $poly(|I|,1/\varepsilon)$ in the size of the input and in $1/\varepsilon$. A similar bound can be given for the number of distinct $\tilde{W}_\ell$ values, while the number of distinct $\tilde{WP}_\ell$ can be bounded by $\log_\Delta (\sum p_j)(\sum w_j)$, which is also bounded by polynomial in the input size and in  $1/\varepsilon$. Therefore, the time complexity of the algorithm on any input $I$ with $n$ jobs is $O(n^q poly(|I|,1/\varepsilon) ^{3q})$. 
Hence, we proved Theorem~\ref{thm:fptas}.

\subsection{FPTAS for {\em hme\/}-input}

In this section we describe how to modify the FPTAS of the previous section to deal with {\em hme\/}-input.
Recall that in such an input, there are $h$ job classes and the number of jobs in class $\mathcal{J}_i$ is $s_i$.
Since $\sum_{i=1}^h s_i$ may not be polynomially bounded in the size of the {\em hme\/}-input, the FPTAS of the previous section would have a pseudo-polynomial time complexity if applied directly to {\em hme\/}-input.

Firstly, we need a slightly different rounding function. Let $\bar{\Delta} := (1+\varepsilon/(2h))$, and
\[
\bar{r}(v) := \left\{\begin{array}{ll}
0, & \textnormal{ if } v = 0,\\
\bar{\Delta}^{\lceil \log_{\bar{\Delta}} v \rceil}, & \textnormal{ if } v > 0.
\end{array}
\right.
\]
Recall that the FPTAS of the previous section runs in $n$-stages ($n$ is the number of the jobs), and in stage $j$, job $j$ is assigned to one of the $\ell$ supply periods. We modify this strategy as follows. In the FPTAS for {\em hme}-input, there are $h$ stages, one for each job class. We assume that $w_1/p_1 \geq \cdots \geq w_h/p_h$, where $w_i$ and $p_i$ are the common weight and processing time, respectively, of all the jobs of job class $i$. The states of the new dynamic program are labelled by $4q+1$ tuples of the form $(i; N_1^\sigma,\ldots,N_q^\sigma; \tilde{P}_1^\sigma,\ldots,\tilde{P}_q^\sigma; \tilde{W}_1^\sigma,\ldots,\tilde{W}_q^\sigma; \tilde{WP}_1^\sigma,\ldots,\tilde{WP}_q^\sigma)$, and they  differ from the states of the dynamic program of the previous section in one important aspect:  the first component is the index of the job class scheduled last.
The initial state is the all-zero vector.
Each arc connects a state $\sigma$ at some stage $i-1\geq 0$ with a state $\sigma'$ at stage $i\leq h$, and it is labelled with a tuple $(\delta_{i1},\ldots,\delta_{iq})$, where $\sum_{\ell=1}^q \delta_{i\ell} = s_i$, which provides the number of jobs from class $\mathcal{J}_i$ assigned to each of the supply periods.
Since the number of such tuples is in the order of $\Omega(s_i^q)$, which is not bounded by a polynomial in the size of the {\em hme}-input in general, we cannot enumerate all the possible assignments of the jobs from class $\mathcal{J}_i$ to supply periods in an algorithm of polynomial running time in the size of the {\em hme}-input and in $1/\varepsilon$.
For this reason, consider the quantities $(1+\varepsilon)^k$, for $k \in \mathcal{K}_i := \{z \in \mathbb{Z}\ |\ 0\leq z \leq \lceil \log_{(1+\varepsilon)} s_i \rceil\}$.
We will use the tuples $(k_1,\ldots, k_q)$, where each $k_\ell \in \mathcal{K}_i$.
Let $E_i$ be the set of eligible tuples, where a tuple  $(k_1,\ldots, k_q)$ is {\em eligible\/} if and only if $s_i \leq \sum_{\ell=1}^q \lfloor (1+\varepsilon)^{k_\ell}\rfloor$.
The number of eligible tuples is bounded by $|\mathcal{K}_i|^q$, which is bounded by $((2\ln s_i) / \varepsilon)^q$, since $|\mathcal{K}_i| \leq (2\ln s_i) / \varepsilon$ as a standard computation shows. However, this is polynomially bounded in the size of the  {\em hme\/}-input and in $1/\varepsilon$.
For each tuple $(k_1,\ldots, k_q) \in E_i$, we compute an assignment of jobs to supply periods by the following Allocation algorithm:
\begin{enumerate}[1.]
\item Let $t = s_i$, and $\ell = q$.
\item While $\ell > 0$ do
\item \ \ Let  $\delta_{i\ell} := \min\{t, \lfloor(1+\varepsilon)^{k_\ell}\rfloor\}$, and $t := t-\delta_{i\ell}$ \label{algo:calc_delta}
\item \ \ Let $\ell := \ell-1$
\item End do 
\item Output: $(\delta_{i1},\ldots,\delta_{iq})$.
\end{enumerate}
Clearly, the output of the algorithm satisfies $\sum_{\ell=1}^q \delta_{i\ell} = s_i$ provided that $(k_1,\ldots,k_q) \in E_i$.
Observe that the jobs of class $\mathcal{J}_i$ are assigned backward, from supply period $q$ to supply period 1. The use of this allocation strategy is in the proof of feasibility of the set of tuples corresponding to the optimal solution, as we will see later.

Consider any state $\sigma = (i-1;  N_1^\sigma,\ldots,N_q^\sigma; \tilde{P}_1^\sigma,\ldots,\tilde{P}_q^\sigma; \tilde{W}_1^\sigma,\ldots,\tilde{W}_q^\sigma; \tilde{WP}_1^\sigma,$ $\ldots,\tilde{WP}_q^\sigma)$.
For each distinct tuple $(\delta_{i1},\ldots,\delta_{iq})$, a subsequent state $\sigma'$ of $\sigma$ is defined as $\sigma' = (i; N_1^{\sigma'},\ldots, N_q^{\sigma'}; \tilde{P}_1^{\sigma'}, \ldots,\tilde{P}_q^{\sigma'}; \tilde{W}_1^{\sigma'},\ldots,\tilde{W}_q^{\sigma'}; \tilde{WP}_1^{\sigma'},\ldots,\tilde{WP}_q^{\sigma'})$, where
$N^{\sigma'}_\ell := N_\ell^\sigma + \delta_{i\ell}$,
$\tilde{P}_\ell^{\sigma'} := \bar{r}( \tilde{P}_\ell^\sigma + \delta_{i\ell} \cdot p_i)$, 
$\tilde{W}_\ell^{\sigma'} := \bar{r}(\tilde{W}_\ell^{\sigma} + \delta_{i\ell} \cdot w_i)$, and
$\tilde{WP}_\ell^{\sigma'} := \bar{r}(\tilde{WP}_\ell^{\sigma} + (\delta_{i\ell}\cdot \tilde{P}_\ell^\sigma + \delta_{i\ell} \cdot (\delta_{i\ell}+1)/2 \cdot p_i)\cdot w_i)$.
If $\sigma'$ is already stored at stage $i$, then the processing of $\sigma'$ is finished. 
Otherwise, we check the {\em feasibility\/} of $\sigma'$ by verifying the condition
\begin{equation}
\sum_{\ell'=1}^\ell N_{\ell'}^{\sigma'} \leq  n_\ell, \textnormal{ for } \ell = 1,\ldots, q-1.\label{eq:feasy}
\end{equation}
We store  $\sigma'$ at stage $i$ only if it  satisfies (\ref{eq:feasy}).

 The states obtained  at stage  $h$ are the {\em terminal states\/}. Clearly, all terminal states represent feasible 
 allocation of jobs to supply periods.
Among the terminal states, we pick the one with smallest value, computed by the formula (\ref{eq:statevalue}).
The solution is obtained by repeatedly moving to the predecessor states until the initial state is reached. The arcs visited provide an eligible assignment from each $E_i$, that together determine a solution of the problem. 

It remains to verify the approximation ratio and the time complexity of the algorithm.
Consider an optimal solution of the scheduling problem, and for each job class $\mathcal{J}_i$ and supply period $\ell$,
let $n^*_{i\ell}$ denote the number of jobs from class $\mathcal{J}_i$ started in the interval $[u_\ell, u_{\ell+1})$, if $\ell < q$, and not before $u_q$ if $\ell = q$. Let $k_{i\ell}$ be the smallest integer such that $n^*_{i\ell} \leq \lfloor (1+\varepsilon)^{k_{i\ell}}\rfloor$.  Clearly, all the tuples $(k_{i1} ,\ldots, k_{iq})$ are eligible.
Let $(\delta_{i1},\ldots,\delta_{iq})$ be the job allocation returned by the Allocation algorithm for the tuple $(k_{i1} ,\ldots, k_{iq})$ for $i=1,\ldots,q$. Consider the sequence of states $\sigma_1$, $\sigma_2,\ldots,\sigma_h$ obtained by  applying the job allocations $(\delta_{i1},\ldots,\delta_{iq})$ in increasing order of the index $i$.

\begin{claim}
The states $\sigma_1,\ldots,\sigma_h$ satisfy the condition (\ref{eq:feasy}), and for each $i$, $\sigma_i$ is a state stored at stage $i$ of the algorithm.
\end{claim}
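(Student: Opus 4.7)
The plan is to verify the two assertions using a single prefix-monotonicity lemma for the Allocation algorithm as the technical core. The lemma: for each class $\mathcal{J}_i$, the allocation $(\delta_{i1},\ldots,\delta_{iq})$ returned by the Allocation algorithm on the tuple $(k_{i1},\ldots,k_{iq})$ satisfies
\[
\sum_{\ell'=1}^{\ell}\delta_{i\ell'} \;\leq\; \sum_{\ell'=1}^{\ell} n^*_{i\ell'}, \qquad \ell = 1,\ldots,q.
\]
I would prove the equivalent backward form $\sum_{\ell'=\ell}^{q}\delta_{i\ell'} \geq \sum_{\ell'=\ell}^{q} n^*_{i\ell'}$ by reverse induction on $\ell$. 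The base case $\ell=q$ holds since $\delta_{iq}=\min\{s_i,\lfloor(1+\varepsilon)^{k_{iq}}\rfloor\}\geq n^*_{iq}$, using $n^*_{iq}\leq\lfloor(1+\varepsilon)^{k_{iq}}\rfloor$ by the defining choice of $k_{iq}$. For the inductive step, set $t_\ell:=s_i-\sum_{\ell'=\ell+1}^q\delta_{i\ell'}$ and split on the min in line~\ref{algo:calc_delta}: if it evaluates to $t_\ell$, then $\sum_{\ell'=\ell}^q\delta_{i\ell'}=s_i\geq\sum_{\ell'=\ell}^q n^*_{i\ell'}$; otherwise $\delta_{i\ell}=\lfloor(1+\varepsilon)^{k_{i\ell}}\rfloor\geq n^*_{i\ell}$, and the induction hypothesis adds $n^*_{i\ell}$ to both sides.

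Before applying the lemma I would confirm that the tuple $(k_{i1},\ldots,k_{iq})$ is eligible, i.e., belongs to $E_i$: indeed $s_i=\sum_{\ell=1}^q n^*_{i\ell}\leq\sum_{\ell=1}^q\lfloor(1+\varepsilon)^{k_{i\ell}}\rfloor$ by the defining choice of $k_{i\ell}$. Hence the algorithm does invoke the Allocation procedure on this tuple while processing stage $i$, producing a well-defined successor of any state already stored at stage $i-1$.

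I would close the argument by induction on $i$. The base case $i=0$ is the all-zero initial state, which is trivially stored. For the inductive step, $N_{\ell'}^{\sigma_i}=\sum_{i'=1}^i\delta_{i'\ell'}$, and so the prefix-monotonicity lemma combined with resource-feasibility of the optimal schedule yields, for $\ell\in\{1,\ldots,q-1\}$,
\[
\sum_{\ell'=1}^\ell N_{\ell'}^{\sigma_i} \;=\; \sum_{i'=1}^{i}\sum_{\ell'=1}^\ell \delta_{i'\ell'} \;\leq\; \sum_{i'=1}^{i}\sum_{\ell'=1}^\ell n^*_{i'\ell'} \;\leq\; \sum_{i'=1}^{h}\sum_{\ell'=1}^\ell n^*_{i'\ell'} \;\leq\; n_\ell,
\]
so $\sigma_i$ satisfies (\ref{eq:feasy}) and is therefore stored at stage $i$ as the successor of $\sigma_{i-1}$ generated by the eligible tuple $(k_{i1},\ldots,k_{iq})$. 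The main obstacle is the monotonicity lemma itself, and it works precisely because the Allocation algorithm fills periods in the order $q, q-1, \ldots, 1$: a forward sweep could push too many jobs into the early periods and violate (\ref{eq:feasy}). This is the structural reason the paper prefers the backward allocation strategy, as hinted after the algorithm's description.
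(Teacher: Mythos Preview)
Your proof is correct and follows essentially the same route as the paper: both arguments hinge on the backward inequality $\sum_{\ell'=\ell}^{q}\delta_{i\ell'} \geq \sum_{\ell'=\ell}^{q} n^*_{i\ell'}$, convert it to the forward prefix bound via $\sum_{\ell'}\delta_{i\ell'}=\sum_{\ell'}n^*_{i\ell'}=s_i$, and then sum over classes and invoke feasibility of the optimal schedule. Your reverse induction with the case split on the $\min$ in line~\ref{algo:calc_delta} simply spells out what the paper compresses into the phrase ``by the rules of the Allocation algorithm,'' and your induction on $i$ is a harmless variant of the paper's observation that it suffices to check $\sigma_h$ (since the $N_{\ell'}^{\sigma_i}$ are nondecreasing in $i$).
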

\begin{proof}
Clearly, the algorithm will generate $\sigma_1$. If it satisfies the condition (\ref{eq:feasy}), then it will generate $\sigma_2$ from it, etc. It suffices to prove that $\sigma_h$ satisfies the condition (\ref{eq:feasy}), because it implies that all previous states do.
By the rules of the Allocation algorithm, $\sum_{\ell'=\ell}^q \delta_{i\ell'} \geq \sum_{\ell'=\ell}^q n^*_{i\ell'}$ for each $\ell$ and $i$. Consequently, $\sum_{\ell'=1}^\ell \delta_{i\ell'} \leq \sum_{\ell'=1}^\ell n^*_{i\ell'}$, since $s_i = \sum_{\ell'=1}^q \delta_{i\ell'} = \sum_{\ell'=1}^q n^*_{i\ell'}$. Since the optimal solution is feasible, we have 
\[
\sum_{\ell' =1}^\ell N^{\sigma_h}_{\ell'} =  \sum_{i=1}^h \sum_{\ell=1}^\ell \delta_{i\ell'} \leq  \sum_{i=1}^h\sum_{\ell'=1}^\ell n^*_{i\ell'} \leq n_\ell,\textnormal{ for } \ell=1,\ldots,q-1,
\]
which proves our claim. \qed
\end{proof}
Let $\sigma$ be the state that is obtained from the initial state by applying  the job allocations $(\delta_{i1},\ldots,\delta_{iq})$ in increasing order of the index $i$, but without rounding the components $P_\ell$, $W_\ell$ and $WP_\ell$ by $\bar{r}(\cdot)$.  Using $\sigma$, the value of $\sigma_h$ can be bounded as follows:
\begin{equation}
\operatorname{value}(\sigma_h) < (1+\varepsilon)^2\sum_{\ell=1}^q \left(WP_\ell^\sigma + \max\left\{u_\ell, \max_{\ell'<\ell}\left(u_{\ell'}+\sum_{k=\ell'}^{\ell-1} P_k^\sigma\right) \right\}\cdot W_\ell^\sigma\right),
\label{eq:bound_sigma_h}
\end{equation}
where the inequality follows from the properties of the rounding function $\bar{r}(\cdot)$.
We have to relate the right-hand-side of the above expression to the value of the optimal solution.
Let $P_\ell^*$, $W^*_\ell$ and $WP_\ell^*$ denote  the total processing time, the total weight, and the total weighted completion time  (if started at time 0) of those jobs assigned to supply period $\ell$ in the optimal solution.
Notice that  $\delta_{i\ell} \leq \lfloor (1+\varepsilon)^{k_{i\ell}}\rfloor \leq (1+\varepsilon) n^*_{i\ell}$.
It follows that
\[
\begin{split}
P_\ell^\sigma & = \sum_{i=1}^h \delta_{i\ell} \cdot  p_i \leq (1+\varepsilon) \sum_{i=1}^h n^*_{i\ell} \cdot  p_i = (1+\varepsilon) P^*_\ell, \\
W_\ell^\sigma & = \sum_{i=1}^h \delta_{i\ell} \cdot  w_i  \leq (1+\varepsilon) \sum_{i=1}^h n^*_{i\ell} \cdot w_i = (1+\varepsilon) W^*_\ell, \textnormal{ and }\\
WP_\ell^\sigma & = \sum_{i=1}^h \left( \delta_{i\ell} \cdot \left(\sum_{j=1}^{i-1} \delta_{j\ell} \cdot p_j \right) + p_i \cdot \delta_{i\ell}\cdot (\delta_{i\ell} +1)/2 \right) \cdot w_i  \\
& \leq (1+\varepsilon)^2\sum_{i=1}^h \left( n^*_{i\ell} \cdot \left(\sum_{j=1}^{i-1} n^*_{j\ell} \cdot p_j \right) + p_i \cdot n^*_{i\ell}\cdot (n^*_{i\ell} +1)/2 \right) \cdot w_i \\
	& =  (1+\varepsilon)^2 WP_\ell^*.
\end{split}
\]
Since the optimum value can be expressed as
\begin{equation*}
OPT = \sum_{\ell=1}^q \left(WP_\ell^* + \max\left\{u_\ell, \max_{\ell'<\ell}\left(u_{\ell'}+\sum_{k=\ell'}^{\ell-1} P_k^*\right) \right\}\cdot W_\ell^*\right),
\end{equation*}
the right-hind-side of (\ref{eq:bound_sigma_h}) can be bounded by $(1+\varepsilon)^4 \cdot OPT$, hence,
the value of $\sigma_h$ is at most $(1+\varepsilon)^4 \cdot OPT$. Since the algorithm chooses the terminal state with smallest value, and $\sigma_h$ is one of the terminal states, the value of the best terminal state is at most  $(1+\varepsilon)^4 \cdot OPT$, which is $(1+O(\varepsilon))\cdot OPT$, since we can assume that $0< \varepsilon \leq 1$.

Finally, the time complexity of the algorithm is proportional to the number of distinct $N_1,\ldots,N_q$ values that can be obtained by choosing an eligible tuple from each $E_i$. This can be bounded by $O(\Pi_{i=1}^h(2(\ln s_i)/\varepsilon)^q)$, which is bounded by $O((2/\varepsilon)^{q\cdot h} \cdot (\Pi_{i=1}^ h \ln   s_i ))^q)$, a polynomial in the size of the {\em hme}-input and in $1/\varepsilon$, provided that $q$ and $h$ are constants.
Therefore, Theorem~\ref{thm:fptas_hme} is proved.

\section{Approximation  of $1|nr=1,p_j=1,w_j=a_j|\sum w_j C_j$}

In this section first we prove Theorem \ref{thm:3approx}, and then  Theorem \ref{thm:2approx}.
For the sake of simpler notation, we assume that the jobs are indexed in non-increasing $w_j$ order, i.e.,  $w_1\geq w_2 \geq \ldots \geq w_n$.

\begin{proof}[Proof of Theorem~\ref{thm:3approx}]
Let $S$ be the solution obtained by scheduling the jobs in non-increasing $w_j$ order as early as possible while respecting the resource constraint, and $S^*$ an optimal schedule.
Let $W_\ell$ and $W^*_\ell$ be the total weight of the jobs that start in $[u_\ell,u_{\ell+1})$ in $S$, and   in $S^*$, respectively.
For $\ell=1,\dots,q-1$, let $k_{\ell}$ be the index of the last job that starts before $u_{\ell+1}$ in $S$.
Let $G_\ell$ and $G^*_\ell$ denote the length of the idle period in $[u_\ell,u_{\ell+1})$ in $S$  and  in $S^*$, respectively.
Let $s_\ell:=G_\ell-G^*_\ell$.
Since $w_j = a_j$ for all jobs, and job $k_\ell+1$ is started not sooner that $u_{\ell+1}$ in $S$, 
we have 
\[
\sum_{\ell'=1}^\ell W_{\ell'}+w_{k_\ell+1} > b_{\ell} \geq \sum_{\ell'=1}^\ell W^*_{\ell'}, \qquad  \ell=1,\dots,q-1,
\]
thus $\sum_{\ell'=\ell+1}^q W_{\ell'}<\sum_{\ell'=\ell+1}^q W^*_{\ell'}+w_{k_\ell+1}$ for $\ell=1,\dots,q-1$.

Note that both of $G_\ell$ and $G^*_\ell$ are at most $u_{\ell+1}-u_\ell$.

Since the jobs are scheduled in non-increasing $w_j$ order in schedule $S$, the objective function value of this schedule is:
\begin{align}
\sum_{j=1}^n jw_j+\sum_{\ell=1}^{q-1} G_\ell \left(\sum_{\ell'=\ell+1}^q W_{\ell'}\right)\leq  \sum_{j=1}^n jw_j+\sum_{\ell=1}^{q-1} G_\ell \left( \sum_{\ell'=\ell+1}^q W^*_{\ell'}+w_{k_\ell+1} \right)\label{eq:obj_S}
\end{align}
On the other hand, we can bound the optimum value from below as follows:
\begin{align*}
\sum_{j=1}^n jw_j+\sum_{\ell=1}^{q-1} G^*_\ell \left(\sum_{\ell'=\ell+1}^q W^*_{\ell'}\right) \leq OPT.
\end{align*}
Hence, the difference between the value of $S$ and the optimum can be bounded from above by
\begin{align}
\sum_{\ell=1}^{q-1} s_\ell \left(\sum_{\ell'=\ell+1}^q W^*_{\ell'} \right)+ \sum_{\ell=1}^{q-1} w_{k_\ell+1}G_\ell.\label{eq:difference}
\end{align}
The first part of the above expression is  at most the optimum value, because $s_\ell\leq u_{\ell+1} - u_\ell$, and then
\[
\begin{split}
\sum_{\ell=1}^{q-1} s_\ell \left(\sum_{\ell'=\ell+1}^q W^*_{\ell'} \right) & \leq \sum_{\ell=1}^{q-1} (u_{\ell+1}-u_\ell) \left(\sum_{\ell'=\ell+1}^q W^*_{\ell'} \right)\\
& = \sum_{\ell=1}^q W^*_\ell \sum_{\ell'=1}^{\ell-1} (u_{\ell'+1}-u_{\ell'}) = \sum_{\ell=1}^q W^*_\ell u_\ell \leq OPT.
\end{split}
\]
The second term of (\ref{eq:difference}) can be bounded by $\sum_{\ell=1}^{q-1} w_{k_\ell+1}(u_{\ell+1}-u_\ell)$, since $G_\ell \leq u_{\ell+1} - u_\ell$. It remains to bound this latter term.

Since in $S$ the jobs are scheduled in non-increasing $w_j$ order, there is a job $j_1\leq k_1+1$ that starts after $u_2$ in $S^*$.
Suppose that it starts in $[u_{\ell_1},u_{\ell_1+1})$.
It contributes to the optimum by at least $w_{k_1+1}u_{\ell_1}$, which is at least $\sum_{\ell=1}^{\ell_1-1} w_{k_\ell+1}(u_{\ell+1}-u_\ell)$.
Furthermore, if $\ell_1 < q$, there is a job $j_2\leq k_{\ell_1}+1$ that starts after $u_{(\ell_1+1)}$ in $S^*$.
Suppose  $j_2$ starts in $[u_{\ell_2},u_{\ell_2+1})$, thus it contributes to the optimum by at least $w_{k_{\ell_1}+1}u_{\ell_2}\geq \sum_{\ell=\ell_1}^{\ell_2-1} w_{k_{\ell}+1}(u_{\ell+1}-u_\ell)$.
We can continue this until we encounter a job $j_t$ that must start after $u_q$ in schedule $S^*$. 
Consequently,
\[
\sum_{\ell=1}^{q-1} w_{k_\ell+1}G_\ell \leq \sum_{\ell=1}^{q-1} w_{k_\ell+1}(u_{\ell+1}-u_\ell) \leq w_{k_1+1} u_{\ell_1} + \sum_{i=2}^t w_{k_{\ell_{i-1}+1}} u_{\ell_i} \leq OPT. 
\]
Thus the second term of (\ref{eq:difference}) is also at most the optimum, hence (\ref{eq:difference}) is at most two times the optimum, therefore $S$ has an objective function value of at most $3\cdot OPT$. \qed
\end{proof}

\begin{proof}[Proof of Theorem~\ref{thm:2approx}]
Let $S$ be the schedule found by the algorithm and $S^*$ an optimal schedule.
We use the same notation as in the proof of Theorem \ref{thm:3approx}, but for simplicity we introduce $G^*:=G^*_1$.
Note that if $G^*=0$, then the algorithm yields an optimal schedule.
For the sake of a contradiction, suppose that there is an instance where the theorem is not true.
Consider a counterexample $I$ with minimal number of jobs, i.e., $\sum_{j=1}^n w_j C_j > 2 \sum_{j=1}^n w_j C^*_j$, where $C_j = S_j +1$ and $C^*_j = S^*_j +1$.

\begin{claim}
Job $J_1$ starts at $u_2$ in $S^*$, i.e., $S^*_1 = u_2$.
\end{claim}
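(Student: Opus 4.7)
The plan is to argue by contradiction: suppose $S^*_1\neq u_2$. Within a single supply period, reordering the jobs preserves feasibility because the cumulative resource usage stays bounded by the period's total weight, and since all processing times are $1$, placing the heaviest job first in its period and pushing all jobs to the earliest slots does not increase the total weighted completion time. Hence I may assume without loss of generality that $J_1$ is the first job of its supply period in $S^*$ and starts at the period's beginning, so $S^*_1\in\{0,u_2\}$. The claim can therefore fail only when $S^*_1=0$, i.e.\ $J_1$ lies in the first supply period of $S^*$. In that case $w_1\le b_1$ by the feasibility of $S^*$, so the greedy algorithm likewise places $J_1$ at time $0$ in $S$.

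The core of the argument is a reduction: consider the instance $I'$ obtained from $I$ by deleting $J_1$ and replacing $b_1$ by $b_1-w_1$, and let $S'$ be the algorithm's output on $I'$. Because greedy processes jobs in non-increasing $w_j$ order, after placing $J_1$ at time $0$ on input $I$ its internal state (available resource $b_1-w_1$, pending jobs $J_2,\dots,J_n$) matches the initial state on $I'$ up to a unit time shift. An induction on the greedy steps then shows that in $S'$ each job of the first supply period of $S$ other than $J_1$ starts one time unit earlier than in $S$, while every second-period job starts at the same time. Consequently $\mathrm{cost}(S')=\mathrm{cost}(S)-W_1$. Analogously, deleting $J_1$ from $S^*$ and shifting the remaining first-period jobs one unit earlier yields a feasible schedule for $I'$, so $OPT'\le OPT-W_1^*$. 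By the minimality of $I$, the smaller instance $I'$ is not a counterexample, so $\mathrm{cost}(S')\le 2\,OPT'$; combining the three estimates yields $\mathrm{cost}(S)\le 2\,OPT+W_1-2W_1^*$. Since $I$ is itself a counterexample, this forces $W_1>2W_1^*$.

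To derive a contradiction, note that $W_1\le b_1$ and $W_1^*\ge w_1$, which gives $b_1-W_1^*>W_1/2>w_1$. Because $I$ is a counterexample we have $G^*>0$, so the first period of $S^*$ has at least one idle slot. If any job $J_j$ in the second period of $S^*$ satisfied $w_j\le b_1-W_1^*$, moving $J_j$ into that idle slot would be feasible and would strictly decrease the objective, contradicting the optimality of $S^*$. Hence every job in the second period of $S^*$ has weight greater than $w_1$, contradicting the maximality of $w_1$; and if the second period of $S^*$ is empty, then $W_1^*=\sum_{j=1}^n w_j\ge W_1$, again contradicting $W_1>2W_1^*$. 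The step I expect to require the most care is the identity $\mathrm{cost}(S')=\mathrm{cost}(S)-W_1$, which rests on a meticulous match of the two greedy executions after the placement of $J_1$; once that is in hand, the rest is a short chain of bounds combined with the exchange argument exploiting the optimality of $S^*$ and the fact that $G^*>0$.
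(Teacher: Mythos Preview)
Your argument is correct, but it takes a noticeably longer route than the paper's. The key difference is in the construction of the reduced instance $I'$. You delete $J_1$ and subtract $w_1$ from $b_1$, leaving $u_2$ unchanged; this causes only the first-period jobs to shift, so the greedy cost drops by $W_1$ while the optimal cost drops by at most $W_1^*$. The mismatch forces you into the additional chain $W_1>2W_1^*$ and then an exchange argument exploiting $G^*>0$ and the optimality of $S^*$ to reach a contradiction. All of this is sound (and your check that the two greedy executions coincide after removing $J_1$ is valid because in a counterexample $G_1>0$, so $k<u_2$ and the period boundary does not interfere).

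The paper avoids all of this with one extra move: in $I'$ it also decreases $u_2$ by $1$. Then \emph{every} remaining job, in both the greedy schedule and in the schedule derived from $S^*$, shifts left by exactly one unit, so both objective values drop by the same amount $\sum_j w_j$. The inequality $\mathrm{cost}(S)>2\,OPT$ then transfers directly to $I'$, giving a smaller counterexample in two lines with no need for the subsequent inequalities or the exchange step. Your approach buys nothing extra; the paper's uniform shift is simply the cleaner reduction.
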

\begin{proof}
Since $J_1$ has the largest weight, if $J_1$ is not started at $u_2$ in the optimal schedule, then it must be started at time 0, i.e., $S^*_1=0$. Then consider the instance $I'$ obtained from $I$ by dropping $J_1$ and by decreasing $b_1$ by $w_1$ and $u_2$ by 1. Then the algorithm gives a schedule $S'$ such that $S'_j = S_j-1$ for each $j =2,\ldots,n$. Furthermore, the objective function value of $S'$ is related to that of $S$ as follows:
\begin{equation}
\sum_{j=2}^n w_j C'_j  = \sum_{j=1}^n w_j(C_j-1)  = \sum_{j=1}^n w_j C_j  - \sum_{j=1}^n w_j.\label{eq:objS'}
\end{equation}
On the other hand, we can derive a new feasible schedule for $I'$ from $S^*$.
Let $\tilde{S}_j = S^*_j-1$ for $j=2,\ldots,n$. This schedule is again feasible, and its value is
\begin{equation}
\sum_{j=2}^n w_j \tilde{C}_j  = \sum_{j=1}^n w_j (C^*_j-1)  = \sum_{j=1}^n w_j C^*_j  - \sum_{j=1}^n w_j.\label{eq:objSt}
\end{equation}
Comparing  (\ref{eq:objS'}), and (\ref{eq:objSt}), we get that $I'$ is also a counterexample with fewer jobs than $I$, a contradiction. \qed
\end{proof}
From now on we assume that $S^*_1 = u_2$.

Let $J_k$ be the last job scheduled before $u_2$ in $S$, see Figure \ref{fig:2approx_q2}.
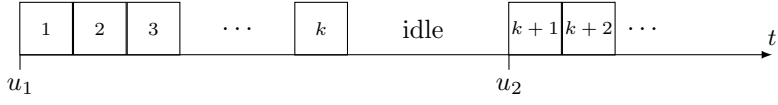
\begin{figure}
\begin{tikzpicture}
\def\ox{0} 
\def\oy{0} 
\def\ui{0}
\def\uii{6.5}
\coordinate(o) at (\ox,\oy); 
\coordinate(u1) at (\ui,\oy);
\coordinate(u2) at (\uii,\oy);

\tikzstyle{mystyle}=[draw, minimum height=0.7cm,rectangle, inner sep=0pt,font=\scriptsize]
\def\pi{0.7}

\def\tl{10} 
\def\oyi{0}
\draw [-latex](\ox,\oyi) node[above left]{} -- (\ox+\tl,\oyi) node[above,font=\small]{$t$};

\coordinate (uq2) at (\uii,\oyi);
\coordinate (uq1) at (\ui,\oyi);
\draw[] (uq2) -- ($(uq2)-(0,0.2)$) node[below right=0cm and -0.3cm] {$u_2$}; 
\draw[] (o) -- ($(o)-(0,0.2)$) node[below right=0cm and -0.3cm] {$u_{1}$};

\node(b1) [above right=-0.01cm and -0.01cm of o,mystyle, minimum width=\pi cm]{$1$};
\node(b3) [right=0cm of b1,mystyle, minimum width=\pi cm]{$2$};
\node(b3) [right=0cm of b3,mystyle, minimum width=\pi cm]{$3$};
\node(b3) [right=0cm of b3, minimum width=1.5 cm]{$\ldots$};
\node(b3) [right=0cm of b3,mystyle, minimum width=\pi cm]{$k$};
\node(b3) [right=0.6cm of b3, minimum width=\pi cm]{idle};

\node(b2) [above right=-0.01cm and -0.01cm of uq2,mystyle, minimum width=\pi cm]{$k+1$};
\node(b3) [right=0cm of b2,mystyle, minimum width=\pi cm]{$k+2$};
\node(b4) [right=0cm of b3, minimum width=\pi cm]{$\ldots$};

\end{tikzpicture}
\caption{Schedule $S$, where the jobs are in non-increasing $w_j$ order.}\label{fig:2approx_q2}
\end{figure}
We can describe the objective function value of $S$ as a special case of (\ref{eq:obj_S}), but now we choose a slightly different form for technical reasons: 
\begin{align}
\sum_{j=1}^n w_jC_j=\sum_{j=1}^k j w_j+u_2\sum_{j=k+1}^n w_j+\sum_{j=k+1}^n (j-k)w_j=\notag\\
\sum_{j=1}^k j w_j+(u_2-k+1)\sum_{j=k+1}^n  w_j+    \sum_{j=k+1}^n (j-1)w_j.\label{eq:2approx_S}
\end{align}

We also give a new expression for the objective function of the optimum schedule $S^*$. Let $\pi^*$ be bijection between the set of positions $\{1,\ldots,n\}$, and the set of jobs such that $\pi^*(i) = j$ if job $j$ is in position $i$ of the optimal schedule $S^*$. Then we have
\begin{align}
\sum_{j=1}^n w_j C^*_j & = \sum_{i=1}^n i\cdot  w_{\pi^*(i)} + W^*_2 \cdot G^* \notag\\
& = (u_2+1)w_1 + \sum_{i=2}^n (i-1)\cdot  w_{\pi^*(i)} + (W^*_2-w_1) (G^*+1)\notag\\
& \geq  (u_2+1)w_1+\sum_{j=2}^n (j-1)w_j+(W^*_2-w_1)\cdot (G^*+1),\label{eq:2approx_opt}
\end{align}
where the inequality follows from the fact that $w_2 \geq w_3 \geq \cdots \geq w_n$, and thus
the sum $\sum_{i=2}^n (i-1)\cdot  w_{\pi^*(i)}$ is minimized by the permutation which assigns job $j$ to position $j$.
The difference of (\ref{eq:2approx_S}) and (\ref{eq:2approx_opt}) is 
\begin{align}
&\sum_{j=1}^k j w_j+(u_2-k+1)\sum_{j=k+1}^n w_j-(u_2+1)w_1-\sum_{j=2}^k(j-1)w_j-(W^*_2-w_1)G^*-\notag\\
&W^*_2+w_1=\sum_{j=1}^k w_j+(u_2-k+1)(W^*_1+W^*_2-\sum_{j=1}^k w_j)-
u_2w_1-(W^*_2-w_1)G^*-W^*_2.\label{eq:diff_q2}
\end{align}

We have to prove that (\ref{eq:diff_q2}) cannot be larger than the optimum.
Since $G^*\neq 0$ and $w_j=a_j$ for all job $j$, $\sum_{j=1}^{k+1}w_j>b_1$ follows, because otherwise the algorithm could have scheduled  job $k+1$ earlier.
However, $W^*_1\leq b_1$, because $S^*$ is feasible, thus we have $W^*_1<\sum_{j=1}^{k+1}w_j$ and therefore the difference is at most 
\begin{align}
&\sum_{j=1}^k w_j+(u_2-k+1)(W^*_2+w_{k+1})-u_2w_1-(W^*_2-w_1)G^*-W^*_2=\notag\\
&\sum_{j=1}^k w_j+u_2W^*_2+u_2(w_{k+1}-w_1)-kW^*_2-(k-1)w_{k+1}-(W^*_2-w_1)G^*.\label{eq:diff_q2_2}
\end{align}
Now, if $k=0$, then then (\ref{eq:diff_q2_2}) simplifies to
\[
u_2 W_2^* + w_1 - (W^*_2 - w_1)G^*.
\]
However, this last expression is a lower bound on the optimum value, since the contribution of those jobs that start after $u_2$ in $S^*$ is at least  $(u_2+1)W^*_2$
and the largest-weight job starts at $u_2$ in $S^*$ as well.

Finally, suppose that $k \geq 1$. Then  (\ref{eq:diff_q2_2}) can be bounded from above by
\[
\sum_{j=1}^k w_j+u_2W^*_2,
\]
because $W^*_2\geq w_1\geq w_{k+1}$.
Furthermore, $\sum_{j=1}^n w_jC^*_j\geq \sum_{j=1}^k w_j+u_2W^*_2$, because each  $C^*_j\geq 1$ and there are jobs with a total weight of at least $W^*_2$ with a completion time of at least $u_2+1$ in $S^*$, thus the theorem follows. \qed
\end{proof}

We have a tight example for this case.
Consider an instance where we have only 2 jobs: $j_1$ with weight $w$ and $j_2$ with weight $w-\varepsilon$. 
Let $\tilde{b}_1:=w-\varepsilon$, $\tilde{b}_2:=w$ and $u_2:=w$.
The algorithm schedules $j_1$ from $u_2$, and $j_2$ from $u_2+1$, thus the objective function value of the resulting schedule is 
\[
(u_2+1)\cdot w+ (u_2+2)\cdot (w-\varepsilon)= 2w^2+3w-\varepsilon(w+2).
\]
However, we can schedule $j_2$ from $t=0$ and $j_1$ from $u_2$ and the value of the resulting schedule is 
\[
w-\varepsilon+ (u_2+1)\cdot w=w^2+2w-\varepsilon.
\]
Note that the relative error of the algorithm on this instance is $\left(\frac{2w^2+O(w)}{w^2+O(w)}\right)$, which tends to 2 as $w$ goes to infinity.

\section{Conclusion}
We have shown several approximation results for different variants of $1|nr=1|\sum w_jC_j$. 
However, there are still a lot of open problems in this area. 
For instance, it is unknown whether there is a polynomial time constant factor approximation algorithm for $1|nr=1|\sum w_jC_j$ or not.
We have conjectured that scheduling the jobs in non-increasing $w_j$ order is a factor 2 approximation algorithm for  $1|nr=1,p_j=1,w_j=a_j|\sum w_jC_j$, but until now we could not prove it.

\section*{Acknowledgement}
This work was supported by the National Research, Development and Innovation Office -- NKFIH, Grant no.~SNN~129178, and ED\_18-2-2018-0006. 
The research of P\'eter Gy\"orgyi was supported by the J\'anos Bolyai Research Scholarship of the Hungarian Academy of Sciences.

\bibliographystyle{apacite}
\bibliography{mybibfile}

\end{document}